\newtheorem{remark}{\bf Remark}
\newtheorem{lemma}{\bf Lemma}
\newtheorem{corollary}{\bf Corollary}
\newtheorem{theorem}{\bf Theorem}
\numberwithin{equation}{section}
\numberwithin{definition}{section}
\numberwithin{lemma}{section}
\numberwithin{remark}{section}
\numberwithin{corollary}{section}
\numberwithin{theorem}{section}
\numberwithin{example}{section}
\begin{document}

\title{A new incremental method of computing the limit load in deformation plasticity models}
\author{J. Haslinger$^{1,2}$, S. Repin$^{3,4}$, S. Sysala$^1$\\ \\
$^1$Institute of Geonics CAS, Ostrava, Czech Republic\\
$^2$Charles University in Prague, Prague, Czech Republic\\
$^3$St. Petersburg   Department   of   V.A. Steklov    Institute   of   Mathematics\\
of   the   Russian   Academy   of   Sciences, Russia\\
$^4$University of Jyv\"{a}skyl\"a, Finland}

\maketitle

\begin{abstract}
The aim of this paper is to introduce a new incremental procedure that can be used for numerical evaluation of the limit load. Existing incremental type methods are based on parametrization of the energy by the loading parameter $\zeta\in[0,\zeta_{lim})$, where $\zeta_{lim}$ is generally unknown. In the new method, the incremental procedure is operated in terms of an inverse mapping and the respective parameter $\alpha$ is changing in the interval $(0,+\infty)$. Theoretically, in each step of this algorithm, we obtain a guaranteed lower bound of $\zeta_{lim}$. Reduction of the problem to a finite element subspace associated with a mesh $\mathcal T_h$ generates computable bound $\zeta_{lim,h}$. Under certain assumptions, we prove that $\zeta_{lim,h}$ tends to $\zeta_{lim}$ as $h\rightarrow0_+$. Numerical tests confirm practical efficiency of the suggested method.
\end{abstract}

\section{Introduction}\label{sec_int}
\setcounter{equation}{0}

Elastic-perfectly plastic models belong among fundamental nonlinear models which are useful for estimation of yield strengths or failure zones in bodies caused by applied forces. Such models are mostly quasistatic (see, e.g., \cite{NPO08, HaRe99, NeHl81}) to catch the unloading phenomenon. Since we are only interested in monotone loading processes, this phenomenon can be neglected and the class of models based on the deformation theory of plasticity is adequate (see, e.g., \cite{J09, M77, NeHl81, RS95, T85}). The Hencky model associated with the von Mises yield criterion belongs to this class as well as other models with different yield conditions.  Each model from this class leads to a static problem for a given load functional $L$ representing the work of surface or volume forces. The problem can be formulated both in terms of stresses or displacements. These two approaches generate a couple of mutually dual problems. 

The variational problem formulated in terms of stresses leads to minimization of a strictly convex, quadratic functional on the set of statically and plastically admissible stress fields. On the other hand, the stored energy functional appearing in the variational problem for displacements has only a linear growth at infinity with respect to the strain tensor or some components of this tensor. Existence of a finite limit load reflects specifics of this class of problems. Unlike other problems in continuum mechanics with superlinear growth of energy, exceeding of the limit load leads to absence of a solution satisfying the equilibrium equations and constitutive relations. Physically this means that under this load the body cannot exists as a consolidated object. Therefore, finding limit loads is an important problem in the theory of elasto--plastic materials and other close problems.

The limit load has the form $\zeta_{lim}L$, where $\zeta_{lim}$ is a nonnegative scalar parameter. In particular, no solution exists for the load $\zeta L$ with $\zeta>\zeta_{lim}$. If we use the static approach to the limit load analysis then $\zeta_{lim}$ is defined as the largest value of $\zeta\geq0$ for which there exists a statically and simultaneously plastically admissible stress field. The value $\zeta_{lim}$ is generally unknown and its finding is an important question in analysis of elasto-plastic structures.

From the above mentioned facts, it is natural to consider the set $\{\zeta L\ |\; \zeta\in\mathbb R_+ \}$ of loads and parametrize the problem in this way. Besides $\zeta_{lim}$, one can detect other interesting thresholds on the loading path that represent global material response, namely, $\zeta_e$ - the end of elasticity and $\zeta_{prop}$ - the limit of proportionality.  For $\zeta\leq\zeta_{e}$, the response is purely elastic (linear) and for $\zeta\in[\zeta_{prop},\zeta_{lim}]$, the response is strongly nonlinear.

There are two possibilities how to evaluate $\zeta_{lim}$: (a) compute $\zeta_{lim}$ directly and (b) find a suitable adaptive strategy that enlarges $\zeta$ and detects the unknown value $\zeta_{lim}$. 

 In the first group of methods, the problem of finding the limit load is reduced to a special variational problem in terms of displacements (kinematic approach), which involves an isoperimetric condition $L(v)~=~1$. For example, the respective problem of limit analysis for the classical Hencky model with the von Mises condition reads as follows:
$$\zeta_{lim}=\inf\limits_{{v\in V,\,L(v)=1}\atop
{\rm div}=0}\,\int_\Omega|\varepsilon(v)|\,dx,$$
where $\mathbb V$ is a subspace of $H^1(\Omega;\mathbb R^3)$ of functions vanishing on the Dirichlet part of the boundary (see notation of Section \ref{sec_setting}).
However, this problem is not simple for numerical analysis because it is related to a nondifferentiable functional and contains the divergence free constraint. The respective numerical approaches developed to overcome these difficulties often use saddle point formulations with augmented Lagrangians (see, e.g., \cite{CG10, Ch96}). Other methods use techniques developed for minimization of nondifferentiable functionals.

Engineering computations often use rather straightforward incremental approach based on solving the displacement problem for a sequence $\zeta_{k+1}=\zeta_k+\triangle\zeta$, $k=0,1,\ldots$. However, this may be costly without a suitable adaptive strategy for the increment $\triangle\zeta$. Moreover, since the problems are solved numerically on a finite dimensional subspace, it is difficult to
reliably verify that $\zeta_k+\triangle\zeta$ exceeds $\zeta_{lim}$.

Solving the problem for a fixed value of $\zeta$ in terms of stresses leads to a variational problem, which is very difficult from the computational point of view since it requires an approximation of equilibrated stress fields which satisfy the plasticity condition in the pointwise sense. Therefore, numerical approaches are usually applied to variational formulations in terms of displacements in spite of the fact that these problems are more complicated from the theoretical point of view because the basic variational problem may have no solution in a standard Sobolev space.

In this paper, we suggest a different incremental technique which is based on transforming the problem into a dual form. Instead of $\zeta$, we use another parameter $\alpha\geq0$ that is dual to $\zeta$ and such that $\zeta\rightarrow\zeta_{lim}^-$ (i.e. $\zeta\rightarrow\zeta_{lim}$ from the left) as $\alpha\rightarrow+\infty$. For a given value of $\alpha$, we derive a minimization problem for the stored strain energy functional subject to the constraint $L(v)=\alpha$ whose solutions define a unique way the respective value $\zeta:=\zeta(\alpha)$. Using this approach, it is possible to find a loading path associated with the given load $L$ which provides a valuable information about $\zeta_e$, $\zeta_{prop}$ and $\zeta_{lim}$.

The parameter $\alpha$ was originally introduced in \cite{CHKS14,SHHC13} for a discrete version of the Hencky problem using the formulation of the problem in terms of displacements (primal problem). The goal of this paper is to generalize this idea to the continuous setting. This generalization however is not straightforward owing to the fact that the primal formulation is not well-posed on classical Sobolev spaces. Therefore the dual formulation of the problem will be used for finding the mutual relation between $\zeta$ and $\alpha$. 

The paper is organized as follows: In Section \ref{sec_setting}, we introduce basic notation, define elasto-plastic problems, and recall some results concerning properties of solutions. In Section \ref{sec_parameters}, the loading parameters $\zeta$ and $\alpha$ are introduced. Then the function $\psi: \alpha\mapsto\zeta$  is constructed and its properties are established. In Section \ref{sec_problems_alpha}, we formulate problems in terms of stresses and displacements related to a prescribed value of $\alpha$. Section \ref{sec_discretization} is devoted to standard finite element discretizations of the problems and to convergence analysis. Finally, in Section \ref{sec_eval}, we present two examples with different yield functions which confirm  practical efficiency of the suggested method.

\section{Elastic-perfectly plastic problem based on the deformation theory of plasticity}
\label{sec_setting}

We consider an elasto-plastic body occupying  a bounded domain $\Omega\subseteq \mathbb{R}^3$ with Lipschitz boundary $\partial\Omega$. It is assumed that $\partial \Omega = \overline{\Gamma}_D \cup \overline{\Gamma}_N$, where $\Gamma_D$ and $\Gamma_N$ are open and disjoint sets, $\Gamma_D$ has a positive surface measure. Surface tractions of density $f$ are applied on $\Gamma_N$ and the body is subject to a volume force $F$.

For the sake of simplicity, we assume that the material is homogeneous. Then, the generalized Hooke's law is represented by the tensor $C$, which does not depend on $x\in \Omega$ and satisfies the following conditions of symmetry and positivity:
$$
\begin{array}{c}
C\eta\in \mathbb{R}_{sym}^{3\times 3}\qquad\forall \eta\in\mathbb{R}_{sym}^{3\times 3},\\[1mm]
C\eta:\xi=\eta:C\xi\qquad\forall \eta,\xi\in\mathbb{R}_{sym}^{3\times 3},\\[1mm]
\exists \delta>0:\;\;\;C\eta:\eta\geq \delta(\eta:\eta)\qquad\forall \eta\in\mathbb{R}_{sym}^{3\times 3},
\end{array}
$$
where $\mathbb{R}_{sym}^{3\times 3}$ is the space of all symmetric, $(3\times 3)$ matrices and $\eta:\xi=\eta_{ij}\xi_{ij}$ denotes the scalar product on $\mathbb{R}_{sym}^{3\times 3}$.

By $S := L^2(\Omega;\mathbb{R}^{3\times 3}_{sym})$, we denote the set of symmetric tensor valued functions with square summable coefficients representing stress and strain fields. On $S$, we define the scalar product
$$\langle \tau, e\rangle = \int_{\Omega}{\tau:e}\,{\mbox{d}}x,\quad \tau, e\in S,$$
and the respective norm $\|\tau\|=\langle\tau, \tau\rangle^{1/2}$. Also, we use equivalent norms suitable for stress ($\tau$) and strain ($e$) fields, respectively:
$$\|\tau\|_{C^{-1}}:=\langle C^{-1}\tau, \tau\rangle^{1/2},\quad \|e\|_{C}=\langle Ce, e\rangle^{1/2}.$$
Further, let
$$\mathbb{V}:= \left\{ v\in H^1(\Omega;\mathbb R^3)\ |\;\; v = 0\ \mbox{on }\Gamma_D \right\}$$
denote the space of kinematically admissible displacements and
$$L(v):= \int_{\Omega} F{\cdot}v {\mbox{d}}x + \int_{\Gamma_N} f{\cdot}v {\mbox{d}}s,\quad v\in\mathbb V$$
be the load functional. We assume that
$$\begin{array}{l l}
(L_1) &  F\in L^2(\Omega;\mathbb R^3),\;\;f\in L^2(\Gamma_N;\mathbb R^3),\\[2mm]
(L_2) &  \|F\|_{ L^2(\Omega;\mathbb R^3)}+\|f\|_{L^2(\Gamma_N;\mathbb R^3)}>0.
\end{array}
$$

The following closed, convex sets represent statically and plastically admissible stress fields, respectively:
$$\Lambda_{L} := \left\{ \tau\in S\ |\ \langle \tau, \varepsilon(v) \rangle = L(v)\quad \forall v\in \mathbb V \right\},$$
$$
  P:=\left\{ \tau \in S\ | \ \Phi(\tau(x)) \leq \gamma \quad \mbox{for a. a. } x\in \Omega\right\}.
$$
Here, $\Phi:\ \mathbb{R}_{sym}^{3\times 3} \rightarrow \mathbb{R}$ is a continuous, convex yield function such that $\Phi(0) = 0$, $\gamma>0$ represents the initial yield stress (which is constant in $\Omega$ due to the homogeneity assumption) and $\varepsilon(v)=\frac{1}{2}\left(\nabla v+(\nabla v)^T\right)$ is the linearized strain tensor corresponding to the displacement $v$.

In accordance with the Haar-Karman variational principle, the actual stress
is a minimizer of the variational problem:
\begin{equation*}
(\mathcal{P}^{*})\qquad
   \mbox{find }\sigma\in\Lambda_{L} \cap P: \quad \mathcal{I}(\sigma) \leq \mathcal{I}(\tau)\quad \forall\tau \in \Lambda_{L} \cap P,
\end{equation*}
where
$$\mathcal{I}(\tau):= \frac{1}{2}\|\tau\|_{C^{-1}}^2, \quad \tau\in S.$$
Problem $(\mathcal{P}^{*})$ has a unique solution if and only if $\Lambda_{L}\cap P \not= \emptyset$.

The corresponding dual problem is formulated in terms of displacements.
It has the form:
\begin{equation*}
(\mathcal{P})\qquad
   \mbox{find } u\in\mathbb V: \quad J(u)\leq J(v)\quad\forall v\in \mathbb V,
\end{equation*}
where
$$J(v):=\Psi(\varepsilon(v))-L(v),\quad v\in \mathbb V, $$
\begin{equation}
  \Psi(e) := \sup_{\tau\in P} \left\{ \langle \tau, e\rangle -\frac{1}{2}\|\tau\|_{C^{-1}}^2 \right\}=-\frac{1}{2} \| \Sigma(e)\|_{C^{-1}}^2 + \langle \Sigma(e), e \rangle \qquad \forall e\in S
\label{Psi}
\end{equation}
and $\Sigma: S\rightarrow S$ is defined by $\Sigma(e)=\Pi(Ce)$ for any $e\in S$. Here $\Pi$ denotes the projection of $S$ on $P$ with respect to the scalar product $\langle C^{-1}\sigma,\tau\rangle$. In addition, $\Sigma$ is the Fr\'echet derivative of $\Psi$, i.e. $\Sigma(e)=\mathbb D\Psi(e)$ for any $e\in S$.  The functional $\Psi$ is convex and differentiable but has only a linear growth at infinity. Therefore, existence of a solution to $(\mathcal{P})$ is not guaranteed in $\mathbb V$ or other Sobolev spaces. 

If $\Lambda_{L}\cap P \not= \emptyset$ then $(\mathcal{P})$ and $(\mathcal{P}^*)$ have finite infima and the duality relation
\begin{equation}
\inf_{v\in \mathbb V}J(v)=\sup_{\tau\in\Lambda_L\cap P}\{-\mathcal{I}(\tau)\}.
\label{duality}
\end{equation}
holds. If $(\mathcal P)$ has a solution $u$ then it satisfies the variational equation
\begin{equation}
\langle\sigma,\varepsilon(v)\rangle=L(v) \quad\forall v\in\mathbb V,
\label{eqn_P}
\end{equation}
where $\sigma:=\Sigma(\varepsilon(u))$ is the unique solution to $(\mathcal{P}^*)$.

\begin{remark}
\emph{In the special case, $P=S$, the problems $(\mathcal{P})$ and $(\mathcal{P}^*)$ lead to well-known primal
and dual formulations of elasticity problems:
\begin{equation*}
(\mathcal{P}_{e})\qquad
   \mbox{find } u_e\in\mathbb V: \quad J_e(u_e)\leq J_e(v)\quad\forall v\in \mathbb V,
\end{equation*}
where
$$J_e(v):=\frac{1}{2}\|\varepsilon(v)\|_C^2-L(v),\quad v\in \mathbb V,$$
and
\begin{equation*}
(\mathcal{P}^{*}_{e})\qquad
  \begin{array}{c}
   \mbox{find }\sigma_e\in\Lambda_{L}: \quad \mathcal{I}(\sigma_e) \leq \mathcal{I}(\tau) \quad \forall\tau \in \Lambda_{L}\;\;\mbox{(Castigliano's principle)},
  \end{array}
\end{equation*}
respectively. Both problems have unique solutions and $C\varepsilon(u_e)=\sigma_e$. Notice that if $C\varepsilon(u_e)\in P$ then $\Sigma(\varepsilon(u_e))=C\varepsilon(u_e)$ and $u_e$ also solves $(\mathcal{P})$. }
\label{remark1}
\end{remark}

\section{Parametrization of the problem}
\label{sec_parameters}

Problems $(\mathcal P)$ and $(\mathcal P^*)$ are defined for a prescribed load functional $L$. Henceforth, we consider a one parametric
family of loads $\zeta L$, where $\zeta\in\mathbb R_+$. Therefore, we use notation $(\mathcal P)_{\zeta}$, $(\mathcal P^*)_{\zeta}$, $(\mathcal P_e)_{\zeta}$, $(\mathcal P_e^*)_{\zeta}$, $\Lambda_{\zeta L}$, and $J_{\zeta}$ instead of $(\mathcal P)$, $(\mathcal P^*)$, $(\mathcal P_e)$, $(\mathcal P_e^*)$, $\Lambda_{L}$, and $J$, respectively. 

The limit load parameter $\zeta_{lim}$ is defined by
$$\zeta_{lim}:=\sup\mathcal D,\quad \mathcal D:=\{\zeta\in\mathbb R_+\ |\;  \Lambda_{\zeta L}\cap P \not= \emptyset\}.$$
Notice that, in some cases, $\zeta_{lim}$ may be infinite. However, in the majority of cases, the value of $\zeta_{lim}$ is finite. From now on, we assume that
$$(L3)\qquad \zeta_{lim}>0.$$

Problem $(\mathcal P^*)_{\zeta}$ has a unique solution for any $\zeta\in\mathcal D$. Depending on the definition of the yield function $\Phi$, we may have one of the following two situations:
\begin{equation}
(a)\;\; \mathcal D =[0,\zeta_{lim})\quad\mbox{or}\quad (b)\;\;\mathcal D=[0,\zeta_{lim}].
\label{lim_prop}
\end{equation}
In general, it is not known, whether $\zeta_{lim}\in\mathcal D$, i.e. $\Lambda_{\zeta_{lim} L}\cap P \not= \emptyset$. This is true, for example, for the von Mises or Tresca criterion  (see \cite{T85}).

From the practical point of view it is very important to know the value of $\zeta_{lim}$. The related problem of limit analysis has been considered in \cite{Ch96, RS95, T85} and publications cited therein. This minimization problem can be solved independently of the original plasticity problem by various numerical methods (see, e.g., \cite{CG10, Ch96}). However, solving this problem leads to rather complicated numerical procedures.

The aim of this paper is to propose and justify a robust way of finding $\zeta_{lim}$, which is based on a different loading parameter. The first principal idea is to introduce a nonnegative function $\phi:\mathbb R\rightarrow\overline{\mathbb R}$ as follows:
\begin{equation}
\phi(\zeta)=\left\{
\begin{array}{c c}
\mathcal{I}(\sigma(\zeta)), & \zeta\in\mathcal D,\\[1mm]
+\infty, &\mbox{otherwise.}
\end{array}\right.
\label{phi}
\end{equation}
Here, $\sigma:=\sigma(\zeta)$ denotes the unique solution to $(\mathcal P^*)_\zeta$. Properties of $\phi$ are summarized in the following lemma.
\begin{lemma}
Let the assumptions $(L1)-(L3)$ be satisfied and let $\phi:\mathbb R\rightarrow\overline{\mathbb R}$ be defined by (\ref{phi}). Then, $\phi$ is a nonnegative, strictly convex and increasing function in $\mathcal D$. Moreover,
\begin{equation}
\phi(\zeta_0)\leq\left(\frac{\zeta_0}{\zeta_1} \right)^2 \phi (\zeta_1)\quad\forall\zeta_0,\zeta_1\in\mathcal D,\;\zeta_0<\zeta_1.
\label{phi_prop}
\end{equation}
\label{lemma_phi_prop}
\end{lemma}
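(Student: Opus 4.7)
The plan is to exploit the scaling and convexity structure of the admissible sets $\Lambda_{\zeta L}$ and $P$, together with strict convexity of $\mathcal{I}$ and the minimizing property of $\sigma(\zeta)$. All four claims reduce to comparing the value $\phi(\zeta) = \mathcal{I}(\sigma(\zeta))$ to $\mathcal{I}(\tau)$ for a cleverly chosen competitor $\tau \in \Lambda_{\zeta L}\cap P$.

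First I would prove the key estimate \eqref{phi_prop}, from which strict monotonicity follows at once. Fix $\zeta_0 < \zeta_1$ in $\mathcal{D}$ and set $\tau := (\zeta_0/\zeta_1)\sigma(\zeta_1)$. A direct check shows $\tau \in \Lambda_{\zeta_0 L}$ because $\langle \tau,\varepsilon(v)\rangle = (\zeta_0/\zeta_1)\zeta_1 L(v) = \zeta_0 L(v)$. To see that $\tau \in P$, write $\tau = (\zeta_0/\zeta_1)\sigma(\zeta_1) + (1-\zeta_0/\zeta_1)\,0$ and use convexity of $\Phi$ together with $\Phi(0)=0$ and $\Phi(\sigma(\zeta_1)(x))\le\gamma$ a.e. Then $\sigma(\zeta_0)$ being the minimizer of $\mathcal{I}$ on $\Lambda_{\zeta_0 L}\cap P$ gives
\[
\phi(\zeta_0) \le \mathcal{I}(\tau) = \tfrac{1}{2}(\zeta_0/\zeta_1)^2\|\sigma(\zeta_1)\|_{C^{-1}}^2 = (\zeta_0/\zeta_1)^2\phi(\zeta_1),
\]
which is \eqref{phi_prop}. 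Strict monotonicity on $\mathcal{D}$ follows provided $\phi(\zeta_1)>0$ for $\zeta_1>0$; but $\phi(\zeta_1)=0$ would force $\sigma(\zeta_1)=0\in \Lambda_{\zeta_1 L}$, hence $\zeta_1 L\equiv 0$, contradicting $(L_2)$. Nonnegativity is immediate from the definition of $\mathcal{I}$.

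For strict convexity, let $\zeta_0\ne\zeta_1$ lie in $\mathcal{D}$, $t\in(0,1)$, and put $\zeta_t := t\zeta_0 + (1-t)\zeta_1$ and $\tau_t := t\sigma(\zeta_0)+(1-t)\sigma(\zeta_1)$. Since $\Lambda_{\zeta L}$ depends linearly on $\zeta$ and $P$ is convex, one checks $\tau_t \in \Lambda_{\zeta_t L}\cap P$, so $\zeta_t\in\mathcal{D}$ and $\phi(\zeta_t)\le \mathcal{I}(\tau_t)$. Strict convexity of $\mathcal{I}$ yields
\[
\mathcal{I}(\tau_t) \le t\,\mathcal{I}(\sigma(\zeta_0)) + (1-t)\,\mathcal{I}(\sigma(\zeta_1)) = t\,\phi(\zeta_0) + (1-t)\,\phi(\zeta_1),
\]
with equality only if $\sigma(\zeta_0)=\sigma(\zeta_1)$; the latter would give $\zeta_0 L=\zeta_1 L$, again ruled out by $(L_2)$. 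Hence the inequality is strict, establishing strict convexity.

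The only step requiring a moment of care is the verification that $\tau=(\zeta_0/\zeta_1)\sigma(\zeta_1)$ lies in $P$: this is not automatic scaling but uses convexity of $\Phi$ together with $\Phi(0)=0$, which is precisely where the assumed structure of the yield set is consumed. All other steps are short computations using linearity of $\Lambda_{\zeta L}$ in $\zeta$, convexity of $P$, and the minimality/uniqueness of $\sigma(\zeta)$.
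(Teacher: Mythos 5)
Your proof is correct and follows essentially the same route as the paper: the scaled competitor $(\zeta_0/\zeta_1)\sigma(\zeta_1)\in\Lambda_{\zeta_0 L}\cap P$ for the growth estimate and monotonicity, and the convex combination of minimizers together with strict convexity of $\mathcal{I}$ and $(L_2)$ for strict convexity. You merely spell out two details the paper leaves implicit (membership of the scaled stress in $P$ via convexity of $\Phi$ with $\Phi(0)=0$, and $\phi(\zeta_1)>0$), which is fine.
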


\begin{proof}
Let $\zeta_0,\zeta_1$ be as in (\ref{phi_prop}) and
 $$\zeta_{\lambda}:= (1-\lambda)\zeta_0 + \lambda \zeta_1, \qquad \lambda \in [0,1].$$
Then
$(1-\lambda)\sigma(\zeta_0) + \lambda \sigma(\zeta_1)  \in \Lambda_{\zeta_{\lambda} L}\cap  P,$
where $\sigma(\zeta_\theta)$ denotes the solution to $(\mathcal P^*)_{\zeta_\theta}$, $\theta \in [0,1]$.
Consequently,
\begin{equation}
\mathcal{I}(\sigma(\zeta_{\lambda})) \leq \mathcal{I} ((1- \lambda) \sigma(\zeta_0) + \lambda \sigma(\zeta_1))\leq(1- \lambda) \mathcal{I} (\sigma(\zeta_0)) + \lambda\mathcal{I}(\sigma(\zeta_1)).
\label{proof_strict_convex}
\end{equation}
Notice that the strict inequality holds in (\ref{proof_strict_convex}) for $\lambda\in(0,1)$ as $\sigma(\zeta_0)\neq\sigma(\zeta_1)$ in view of the assumption $(L2)$. Thus,  $\phi$ is convex on $\mathbb R$ and strictly convex on $\mathcal D$.

From the definition of the yield function $\Phi$, it follows that $\frac{\zeta_0}{\zeta_1} \sigma (\zeta_1) \in \Lambda_{\zeta_0 L}\cap P$. Therefore, we have:
$$\phi(\zeta_0) = \mathcal{I}(\sigma(\zeta_0)) \leq \mathcal{I}\left(\frac{\zeta_0}{\zeta_1}\sigma(\zeta_1)\right) =\left(\frac{\zeta_0}{\zeta_1} \right)^2 \phi (\zeta_1).$$
Hence, (\ref{phi_prop}) is proved and, since, $\sigma(\zeta_1)\neq0$ we conclude that $\phi$ is an increasing function on $\mathcal D$. 
\end{proof}

\begin{lemma}
Let $\zeta_{lim}\not\in\mathcal D$. Then
\begin{equation}
\lim_{\zeta\rightarrow\zeta_{lim}^-}\phi(\zeta)=+\infty.
\label{phi_prop2}
\end{equation}
\label{lemma_phi_prop2}
\end{lemma}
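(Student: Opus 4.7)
My plan is to argue by contradiction. Since $\phi$ is increasing on $\mathcal D=[0,\zeta_{lim})$ by Lemma \ref{lemma_phi_prop}, the limit $\lim_{\zeta\to\zeta_{lim}^-}\phi(\zeta)$ exists in $\overline{\mathbb R}_+$. If it were finite, equal to some $M<+\infty$, I would show this forces $\zeta_{lim}\in\mathcal D$, contradicting the hypothesis.

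The key compactness step: from $\phi(\zeta_n)\leq M$ along any sequence $\zeta_n\to\zeta_{lim}^-$ in $\mathcal D$, I would obtain $\|\sigma(\zeta_n)\|_{C^{-1}}^2\leq 2M$. By equivalence of the norms $\|\cdot\|_{C^{-1}}$ and $\|\cdot\|$ on $S$, the sequence $\{\sigma(\zeta_n)\}$ is bounded in the reflexive Hilbert space $S=L^2(\Omega;\mathbb R^{3\times3}_{sym})$, so there is a subsequence (not relabeled) and $\sigma^\ast\in S$ with $\sigma(\zeta_n)\rightharpoonup\sigma^\ast$ weakly in $S$.

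The next step is to verify that $\sigma^\ast\in\Lambda_{\zeta_{lim}L}\cap P$, which would place $\zeta_{lim}$ in $\mathcal D$ and produce the desired contradiction. The static admissibility is easy: for any $v\in\mathbb V$, the functional $\tau\mapsto\langle\tau,\varepsilon(v)\rangle$ is linear and continuous on $S$, hence weakly continuous, so passing to the weak limit in the identity $\langle\sigma(\zeta_n),\varepsilon(v)\rangle=\zeta_n L(v)$ yields $\langle\sigma^\ast,\varepsilon(v)\rangle=\zeta_{lim}L(v)$, i.e.\ $\sigma^\ast\in\Lambda_{\zeta_{lim}L}$. For $\sigma^\ast\in P$, I would use that $P$ is convex (because $\Phi$ is convex) and strongly closed in $S$: indeed, if $\tau_k\to\tau$ in $L^2$ then, up to a subsequence, $\tau_k\to\tau$ a.e.\ in $\Omega$, and continuity of $\Phi$ combined with $\Phi(\tau_k(x))\leq\gamma$ a.e.\ yields $\Phi(\tau(x))\leq\gamma$ a.e. Convex and strongly closed sets in Hilbert spaces are weakly closed (Mazur's theorem), so $\sigma^\ast\in P$.

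The only slightly subtle point is the weak closedness of $P$, but this is routine given continuity and convexity of $\Phi$ together with Mazur's theorem. Everything else is just passing to the weak limit in a linear constraint and invoking reflexivity. Once both $\sigma^\ast\in\Lambda_{\zeta_{lim}L}$ and $\sigma^\ast\in P$ are in hand, we have $\Lambda_{\zeta_{lim}L}\cap P\neq\emptyset$, hence $\zeta_{lim}\in\mathcal D$, contradicting the assumption of the lemma and proving (\ref{phi_prop2}).
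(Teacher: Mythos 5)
Your compactness argument for the case $\zeta_{lim}<+\infty$ is correct and is essentially the paper's own proof: the finite limit bounds $\mathcal I(\sigma(\zeta_n))$, hence $\{\sigma(\zeta_n)\}$ is bounded in $S$, a weakly convergent subsequence exists, and its weak limit lies in $\Lambda_{\zeta_{lim}L}\cap P$, contradicting $\zeta_{lim}\notin\mathcal D$. You simply spell out the details the paper leaves implicit (weak continuity of the linear constraint defining $\Lambda_{\zeta_{lim}L}$, and weak closedness of $P$ via convexity, strong closedness and Mazur's theorem), which is fine.

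There is, however, a missing case: the lemma also applies when $\zeta_{lim}=+\infty$, since $\mathcal D\subseteq\mathbb R_+$ makes the hypothesis $\zeta_{lim}\notin\mathcal D$ automatic, and the assertion then reads $\phi(\zeta)\rightarrow+\infty$ as $\zeta\rightarrow+\infty$. Your argument as written does not cover this: $\Lambda_{\zeta_{lim}L}$ is not defined for $\zeta_{lim}=+\infty$, passing to the limit in $\langle\sigma(\zeta_n),\varepsilon(v)\rangle=\zeta_n L(v)$ gives an unbounded right-hand side whenever $L(v)\neq0$, and ``forcing $\zeta_{lim}\in\mathcal D$'' is not the relevant contradiction there. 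The paper dispatches this case first, using the quadratic growth inequality (\ref{phi_prop}) from Lemma \ref{lemma_phi_prop}: fixing $\zeta_0\in\mathcal D$ with $\zeta_0>0$ (so that $\phi(\zeta_0)>0$ by $(L2)$, $(L3)$), one gets $\phi(\zeta_1)\geq(\zeta_1/\zeta_0)^2\phi(\zeta_0)\rightarrow+\infty$. Alternatively, your own boundedness step yields a quick repair: if $\zeta_n\rightarrow+\infty$ while $\|\sigma(\zeta_n)\|$ stays bounded, choose $v_0\in\mathbb V$ with $L(v_0)\neq0$ (possible by $(L2)$); then $\zeta_n|L(v_0)|=|\langle\sigma(\zeta_n),\varepsilon(v_0)\rangle|\leq\|\sigma(\zeta_n)\|\,\|\varepsilon(v_0)\|$ remains bounded, a contradiction. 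With either repair added, your proof is complete and coincides in substance with the paper's.
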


\begin{proof}
If $\zeta_{lim}=+\infty$ then (\ref{phi_prop2}) follows from (\ref{phi_prop}). Let $$\zeta_{lim}<+\infty$$ and suppose that $\lim_{\zeta\rightarrow\zeta_{lim}^-}\phi(\zeta)\in\mathbb R_+^1$. Then there exist sequences $\{\zeta_j\}$, $\{\sigma(\zeta_j)\}$ and an element $\bar\sigma\in S$ such that
$$\zeta_j\rightarrow\zeta_{lim}^-,\quad \sigma(\zeta_j)\rightharpoonup\bar\sigma\;\;\mbox{in }S,\quad j\rightarrow+\infty.$$
In addition, $\bar\sigma\in\Lambda_{\zeta_{lim}L}\cap P$ which contradicts the assumption.
\end{proof}

\begin{lemma}
The function $\phi$ defined by (\ref{phi}) is continuous in $\mathcal D$.
\label{lemma_phi_prop3}
\end{lemma}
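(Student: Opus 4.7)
The plan is to split $\mathcal D$ into three regions and verify continuity separately on each: the open interior $(0,\zeta_{lim})$, the left endpoint $\zeta=0$, and (in case $(b)$ of \eqref{lim_prop}) the right endpoint $\zeta=\zeta_{lim}$.

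On the interior $(0,\zeta_{lim})$, I would simply invoke the classical fact that a finite, convex, real-valued function on an open interval of $\mathbb R$ is continuous. Convexity of $\phi$ and finiteness of $\phi$ on $\mathcal D$ were already established in Lemma \ref{lemma_phi_prop}, so nothing remains to prove on $(0,\zeta_{lim})$.

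At $\zeta=0$, first observe that $\sigma(0)=0$ is the unique solution of $(\mathcal P^*)_0$ (since $0\in\Lambda_{0L}\cap P$ and $\mathcal I$ is nonnegative and vanishes only at $0$), whence $\phi(0)=0$. Continuity from the right then drops out of the scaling inequality \eqref{phi_prop}: fixing any $\zeta_1\in\mathcal D$ with $\zeta_1>0$, one has $0\le \phi(\zeta_0)\le (\zeta_0/\zeta_1)^2\phi(\zeta_1)\to 0$ as $\zeta_0\to 0^+$.

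The only genuinely non-trivial case is the right endpoint under regime $(b)$ of \eqref{lim_prop}, i.e.\ when $\zeta_{lim}\in\mathcal D$; I expect this to be the main obstacle, because convexity alone only gives upper semicontinuity from the left at a right endpoint. Take a sequence $\zeta_j\to\zeta_{lim}^-$. The inequality \eqref{phi_prop} gives the cheap half, $\limsup_j\phi(\zeta_j)\le\lim_j(\zeta_j/\zeta_{lim})^2\phi(\zeta_{lim})=\phi(\zeta_{lim})$. For the reverse bound I would use the weak topology of $S$: since $\{\sigma(\zeta_j)\}$ is bounded in $S$ (as $\phi(\zeta_j)$ stays bounded), extract a subsequence with $\sigma(\zeta_j)\rightharpoonup\bar\sigma$ in $S$. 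The set $\Lambda_{\zeta_j L}$ is an affine space defined by the linear equations $\langle\tau,\varepsilon(v)\rangle=\zeta_j L(v)$, $v\in\mathbb V$, so passing to the weak limit yields $\bar\sigma\in\Lambda_{\zeta_{lim}L}$; and $P$ is convex and strongly closed in $S$ (by a.e.-subsequence extraction and continuity of $\Phi$), hence weakly closed, so $\bar\sigma\in P$ as well. Optimality of $\sigma(\zeta_{lim})$ together with weak lower semicontinuity of the norm $\|\cdot\|_{C^{-1}}$ then gives
$$
\phi(\zeta_{lim})=\mathcal I(\sigma(\zeta_{lim}))\le\mathcal I(\bar\sigma)\le\liminf_j\mathcal I(\sigma(\zeta_j))=\liminf_j\phi(\zeta_j).
$$
Combined with the $\limsup$-bound above, this yields $\phi(\zeta_j)\to\phi(\zeta_{lim})$ for any such sequence, and continuity at $\zeta_{lim}$ follows. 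The three pieces together prove continuity of $\phi$ on all of $\mathcal D$.
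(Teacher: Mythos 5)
Your proof is correct and follows essentially the same route as the paper: convexity on the interior of $\mathcal D$, the scaling inequality \eqref{phi_prop} at $\zeta=0$, and weak compactness of the bounded stresses plus weak lower semicontinuity of $\mathcal I$ at the right endpoint when $\zeta_{lim}\in\mathcal D$. The only (harmless) difference is at that endpoint: you bound $\liminf_j\phi(\zeta_j)$ from below via optimality of $\sigma(\zeta_{lim})$ applied to the admissible weak limit $\bar\sigma$, whereas the paper identifies $\bar\sigma=\sigma(\zeta_{lim})$ by comparison with the scaled fields $\tau_j=\frac{\zeta_j}{\zeta_{lim}}\tau$ and uses monotonicity of $\phi$ for the other inequality -- both close the argument the same way.
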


\begin{proof}
Continuity of $\phi$ in $\mbox{int}\, \mathcal D$ follows from its convexity. From (\ref{phi_prop}), we see that
$$\lim_{\zeta_0\rightarrow0_+}\phi(\zeta_0)=0=\mathcal I(\sigma(0))=\phi(0).$$
Let $\zeta_{lim}\in\mathcal D$ and
$$\lim_{\zeta\rightarrow\zeta_{lim}^-}\phi(\zeta)=c\in\mathbb R_+.$$
To show that $c=\phi(\zeta_{lim})$ we proceed as in Lemma \ref{lemma_phi_prop2}. Let $\zeta_j\rightarrow\zeta_{lim}^-$ and $\sigma(\zeta_j)\rightharpoonup\bar\sigma\in\Lambda_{\zeta_{lim}L}\cap P$. Let $\tau\in\Lambda_{\zeta_{lim}L}\cap P$ be arbitrary and set $\tau_j=\frac{\zeta_j}{\zeta_{lim}}\tau\in\Lambda_{\zeta_{j}L}\cap P$. Then $\tau_j\rightarrow\tau$ in $S$ and from the definition of $(\mathcal P)_{\zeta_j}^*$ we have
$$\phi(\zeta_j)=\mathcal I(\sigma(\zeta_j))\leq\mathcal I(\tau_j).$$
Hence,
$$\mathcal I(\bar\sigma)\leq\liminf_{j\rightarrow+\infty}\mathcal I(\sigma(\zeta_j))\leq\lim_{j\rightarrow+\infty}\mathcal I(\tau_j)=\mathcal I(\tau),$$
i.e. $\bar\sigma=\sigma(\zeta_{lim})$ proving that $\phi(\zeta_{lim})=\mathcal I(\sigma(\zeta_{lim}))\leq c$. The opposite inequality $\phi(\zeta_{lim})\geq c$ follows from monotonicity of $\phi$.
\end{proof}

\begin{remark}
\emph{It is worth noting that:
\begin{itemize}
\item[$a)$]  $\phi(\zeta)=\zeta^2\mathcal{I}(\sigma_e)$ if $\zeta\in[0,\zeta_e]$, where
$$\zeta_e:=\sup\{\zeta\in\mathbb R_+\ |\;\; \zeta C\varepsilon(u_e)\in P\},\quad u_e\;\mbox{solves }(\mathcal P_e).$$
\item[$b)$] (\ref{phi_prop}) ensures a quadratic growth of $\phi$ at infinity if $\zeta_{lim}=+\infty$. 
\end{itemize}}
\label{remark3}
\end{remark}

Now, we introduce a new parameter $\alpha$, which plays a crucial role in forthcoming analysis. We set
\begin{equation}
\left.
\begin{array}{l c l}
\alpha=0 &\mbox{if}& \zeta=0,\\
\alpha\in\partial\phi(\zeta) &\mbox{if}& \zeta\in\mathcal D\setminus\{0\}
\end{array}\right\}
\label{alpha_def}
\end{equation}
From monotonicity of $\phi$, it follows that $\partial\phi(\zeta)\subset(0,+\infty)$  for any $\zeta\in\mathcal D\setminus\{0\}$. Moreover,
\begin{equation}
\bigcup_{\zeta\in\mathcal D\setminus\{0\}}\partial\phi(\zeta)=(0,+\infty).
\label{cup_gradient}
\end{equation}
 Indeed, from the definition of the subgradient of $\phi$ at $\zeta$ we know that $\alpha\in\partial\phi(\zeta)$ if and only if 
\begin{equation}
\phi(\zeta)-\alpha\zeta\leq\phi(\tilde\zeta)-\alpha\tilde\zeta\quad \forall\tilde\zeta\in\mathcal D.
\label{zeta_min}
\end{equation}
From Lemma \ref{lemma_phi_prop} - \ref{lemma_phi_prop3} and Remark \ref{remark3}$\,b)$ we know that the function $\tilde\zeta\mapsto\phi(\tilde\zeta)-\alpha\tilde\zeta$ has a unique minimizer $\zeta$ in $\mathcal D$ for any $\alpha\in[0,+\infty)$ so that (\ref{cup_gradient}) holds. This fact enables us to define the function $\psi:\mathbb R_+\rightarrow\mathcal D$, $\psi:\alpha\mapsto\zeta$, where $\zeta=\psi(\alpha)\in\mathcal D$ is the unique solution of (\ref{zeta_min}) for given $\alpha$. In the next theorem, we establish some useful properties of $\psi$.

\begin{theorem}
Let the assumptions $(L1)-(L3)$ be satisfied. Then
\begin{enumerate}
\item[(i)] $\psi$ is continuous and nondecreasing in $\mathbb R_+$;
\item[(ii)] $\psi(\alpha)\rightarrow\zeta_{lim}$ as $\alpha\rightarrow+\infty$.
\end{enumerate}
\label{theorem_psi}
\end{theorem}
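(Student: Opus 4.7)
The plan is to exploit the minimality characterization (\ref{zeta_min}) of $\zeta=\psi(\alpha)$, together with the nonnegativity, convexity, monotonicity, and continuity of $\phi$ on $\mathcal D$ established in Lemmas \ref{lemma_phi_prop}--\ref{lemma_phi_prop3}.

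For the monotonicity part of (i), I would take $0\le\alpha_1<\alpha_2$ in $\mathbb R_+$, set $\zeta_i:=\psi(\alpha_i)$, and apply (\ref{zeta_min}) twice: once at $\alpha_1$ with competitor $\zeta_2$, once at $\alpha_2$ with competitor $\zeta_1$. Summing the two inequalities, the $\phi$-values cancel and we obtain $(\alpha_2-\alpha_1)(\zeta_2-\zeta_1)\ge 0$, hence $\zeta_1\le\zeta_2$. For continuity at a fixed $\alpha\ge 0$, I let $\alpha_n\to\alpha$ and put $\zeta_n:=\psi(\alpha_n)$. Monotonicity of $\psi$ makes $\{\zeta_n\}$ bounded (by $\psi(\sup_n\alpha_n+1)$ when $\zeta_{lim}=+\infty$, by $\zeta_{lim}$ otherwise), so some subsequence $\zeta_{n_k}$ converges to a limit $\zeta^*\in\mathcal D$. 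Passing to the limit in (\ref{zeta_min}) for each competitor $\tilde\zeta\in\mathcal D$ and using continuity of $\phi$ on $\mathcal D$ (Lemma \ref{lemma_phi_prop3}), I conclude that $\zeta^*$ minimizes $\tilde\zeta\mapsto\phi(\tilde\zeta)-\alpha\tilde\zeta$ on $\mathcal D$; uniqueness of that minimizer forces $\zeta^*=\psi(\alpha)$. Since every cluster point of $\{\zeta_n\}$ agrees with $\psi(\alpha)$, the whole sequence converges, which gives continuity.

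For (ii), monotonicity together with $\psi(\alpha)\le\zeta_{lim}$ imply that the limit $\zeta^*:=\lim_{\alpha\to+\infty}\psi(\alpha)$ exists in $[0,\zeta_{lim}]$. Assuming by contradiction that $\zeta^*<\zeta_{lim}$, I would pick any $\tilde\zeta\in\mathcal D$ with $\zeta^*<\tilde\zeta<\zeta_{lim}$ (possible because $[0,\zeta_{lim})\subseteq\mathcal D$) and combine (\ref{zeta_min}) with $\phi\ge 0$:
\begin{equation*}
\alpha(\tilde\zeta-\psi(\alpha))\;\le\;\phi(\tilde\zeta)-\phi(\psi(\alpha))\;\le\;\phi(\tilde\zeta).
\end{equation*}
Since $\tilde\zeta-\psi(\alpha)\to\tilde\zeta-\zeta^*>0$, the left-hand side tends to $+\infty$, a contradiction. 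The case $\zeta_{lim}=+\infty$ is analogous: any finite hypothetical limit $\zeta^*$ is surpassed by some competitor $\tilde\zeta\in\mathcal D=[0,+\infty)$ and the same inequality yields the contradiction.

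The most delicate point, I expect, is the continuity argument when $\zeta_{lim}\notin\mathcal D$: a priori the cluster point $\zeta^*$ could equal $\zeta_{lim}$, where $\phi$ is not finite and Lemma \ref{lemma_phi_prop3} does not apply. This has to be ruled out before passing to the limit, and I would do so by inserting the competitor $\tilde\zeta=0$ into (\ref{zeta_min}), which yields $\phi(\zeta_n)\le\alpha_n\zeta_n$ with uniformly bounded right-hand side; since Lemma \ref{lemma_phi_prop2} gives $\phi(\zeta)\to+\infty$ as $\zeta\to\zeta_{lim}^-$, this excludes $\zeta^*=\zeta_{lim}$ and makes the subsequent passage to the limit rigorous.
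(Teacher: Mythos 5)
Your proof is correct, but for part $(i)$ it takes a genuinely different route from the paper. The paper works through the Legendre--Fenchel conjugate $\phi^*$: since the minimizer in \eqref{zeta_min} is unique for every $\alpha\geq 0$, the subdifferential $\partial\phi^*(\alpha)$ is a singleton, hence $\phi^*$ is differentiable with $(\phi^*)'=\psi$, and convexity plus differentiability of $\phi^*$ yield monotonicity and continuity of $\psi$ in one stroke. You instead prove monotonicity by the standard two-inequality subgradient argument (summing \eqref{zeta_min} at $\alpha_1$ and $\alpha_2$) and continuity by a compactness/uniqueness-of-minimizer argument; crucially, you notice and close the one delicate gap in that route, namely that a cluster point of $\psi(\alpha_n)$ could a priori sit at $\zeta_{lim}\notin\mathcal D$, which you exclude via the competitor $\tilde\zeta=0$ in \eqref{zeta_min} together with Lemma~\ref{lemma_phi_prop2}, before invoking Lemma~\ref{lemma_phi_prop3} to pass to the limit. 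The paper's argument is shorter and delegates the regularity of $\psi$ to general facts about conjugate convex functions, while yours is more elementary and self-contained, making explicit exactly where Lemmas~\ref{lemma_phi_prop}--\ref{lemma_phi_prop3} enter. For part $(ii)$ the two arguments are essentially the same contradiction with a competitor $\hat\zeta\in(\zeta_{max},\zeta_{lim})$; your rearrangement $\alpha(\tilde\zeta-\psi(\alpha))\leq\phi(\tilde\zeta)$, which discards $\phi(\psi(\alpha))\geq 0$, is marginally cleaner since it does not require observing that $\phi(\psi(\cdot))$ stays bounded under the contradiction hypothesis.
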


\begin{proof}
Let $\alpha>0$ be given and $\phi^*$ be the Legendre-Fenchel transformation of $\phi$. It is well known that $\phi^*$ is a convex function in $\mathbb R_+$ and (\ref{alpha_def})$_2$ holds if and only if $\zeta\in\partial\phi^*(\alpha)$. Since $\zeta=\psi(\alpha)$, it holds that $\partial\phi^*(\alpha)$ is singleton and $(\phi^*)'(\alpha)=\psi(\alpha)\geq0$. Therefore, convexity and differentiability of $\phi^*$  in $\mathbb R_+$ entail that $\psi$ is continuous and  nondecreasing in $\mathbb R_+$ and $(i)$ holds.

By $(i)$, there exists $\zeta_{max}\leq\zeta_{lim}$ such that $\zeta_{max}=\lim_{\alpha\rightarrow+\infty}\psi(\alpha)$. Suppose that 
\begin{equation}
\lim_{\alpha\rightarrow+\infty}\psi(\alpha)=\zeta_{max}<\zeta_{lim}.
\label{assump_contr}
\end{equation}
Then $\phi(\psi(.))$ is bounded on $\mathbb R_+$ and 
$$\lim_{\alpha\rightarrow+\infty}\psi(\alpha)=\lim_{\alpha\rightarrow+\infty}\left\{\psi(\alpha)-\frac{\phi(\psi(\alpha))}{\alpha}\right\}\stackrel{(\ref{zeta_min})}{=}\lim_{\alpha\rightarrow+\infty}\sup_{\tilde\zeta\in\mathcal D}\left\{\tilde\zeta-\frac{\phi(\tilde\zeta)}{\alpha}\right\}\geq\lim_{\alpha\rightarrow+\infty}\left\{\hat\zeta-\frac{\phi(\hat\zeta)}{\alpha}\right\}=\hat\zeta$$
holds for any $\hat\zeta\in\mathcal D$. The choice $\hat\zeta\in(\zeta_{max},\zeta_{lim})$ contradicts (\ref{assump_contr}) and thus $(ii)$ holds.
\end{proof}

\begin{remark}
\emph{It is easy to show that $$\psi(\alpha)=\frac{1}{2\mathcal{I}(\sigma_e)}\alpha\qquad\forall\alpha\in[0,\alpha_e],$$
where $\alpha_e=2\zeta_e\mathcal{I}(\sigma_e)$, $\sigma_e$ solves $(\mathcal P^*_e)$ and $\zeta_e$ is the same as in Remark \ref{remark3}.}
\end{remark}

Figure \ref{fig_schemes} depicts three possible cases of the behaviour of $\phi$, $\psi$ for $\zeta\rightarrow\zeta_{lim}$, and $\alpha\rightarrow+\infty$, respectively.
\begin{figure}[htb!]
\centering

\begin{picture}(100,90)
\put(0,0){\vector(0,1){90}} \put(0,0){\vector(1,0){100}}
{\thicklines
\qbezier(40,10)(60,20)(59,90)
\qbezier(0,0)(25,0)(40,10)
}
\multiput(60,0)(0,4){23}{\put(0,0){\line(0,2){2}}}
\put(-4,88){\makebox(0,0)[r]{$\phi(\zeta)$}}
\put(58,-4){\makebox(0,0)[t]{$\zeta_{lim}$}}
\put(98,-4){\makebox(0,0)[t]{$\zeta$}}
\put(-30,50){\makebox(0,0)[r]{$(a)$}}
\end{picture}
\hspace*{40pt}
\begin{picture}(140,90)
\put(0,0){\vector(0,1){90}} \put(0,0){\vector(1,0){140}}
{\thicklines
\put(0,0){\line(1,2){20}}
\qbezier(20,40)(30,70)(140,70)
}
\multiput(0,73)(4,0){35}{\put(0,0){\line(1,0){2}}}

\put(-4,88){\makebox(0,0)[r]{$\psi(\alpha)$}}
\put(-2,72){\makebox(0,0)[r]{$\zeta_{lim}$}}
\put(138,-4){\makebox(0,0)[t]{$\alpha$}}
\end{picture}

\bigskip\bigskip

\begin{picture}(100,90)
\put(0,0){\vector(0,1){90}} \put(0,0){\vector(1,0){100}}
{\thicklines
\qbezier(0,0)(50,0)(60,80)
}
\multiput(60,0)(0,4){23}{\put(0,0){\line(0,2){2}}}
\put(61,80){\circle*{4}}
\put(-4,88){\makebox(0,0)[r]{$\phi(\zeta)$}}
\put(58,-4){\makebox(0,0)[t]{$\zeta_{lim}$}}
\put(98,-4){\makebox(0,0)[t]{$\zeta$}}
\put(-30,50){\makebox(0,0)[r]{$(b)$}}
\end{picture}
\hspace*{40pt}
\begin{picture}(140,90)
\put(0,0){\vector(0,1){90}} \put(0,0){\vector(1,0){140}}
{\thicklines
\put(0,0){\line(1,2){20}}
\qbezier(20,40)(30,70)(100,73)
\put(100,73){\line(1,0){40}}
}
\multiput(0,73)(4,0){35}{\put(0,0){\line(1,0){2}}}

\put(-4,88){\makebox(0,0)[r]{$\psi(\alpha)$}}
\put(-2,72){\makebox(0,0)[r]{$\zeta_{lim}$}}
\put(138,-4){\makebox(0,0)[t]{$\alpha$}}
\end{picture}

\bigskip\bigskip

\begin{picture}(100,90)
\put(0,0){\vector(0,1){90}} \put(0,0){\vector(1,0){100}}
{\thicklines
\qbezier(0,0)(80,0)(100,90)
}
\put(-4,88){\makebox(0,0)[r]{$\phi(\zeta)$}}
\put(98,-4){\makebox(0,0)[t]{$\zeta$}}
\put(-30,50){\makebox(0,0)[r]{$(c)$}}
\end{picture}
\hspace*{40pt}
\begin{picture}(140,90)
\put(0,0){\vector(0,1){90}} \put(0,0){\vector(1,0){140}}
{\thicklines
\put(0,0){\line(1,2){20}}
\qbezier(20,40)(30,70)(140,90)
}
\put(-4,88){\makebox(0,0)[r]{$\psi(\alpha)$}}
\put(138,-4){\makebox(0,0)[t]{$\alpha$}}
\end{picture}
\medskip
\caption{Graphs of $\phi$ and $\psi$: $(a)$ $\partial\phi(\zeta_{lim})=\emptyset$, $(b)$ $\partial\phi(\zeta_{lim})\not=\emptyset$, $(c)$ $\zeta_{lim}=+\infty$.}
\label{fig_schemes}
\end{figure}
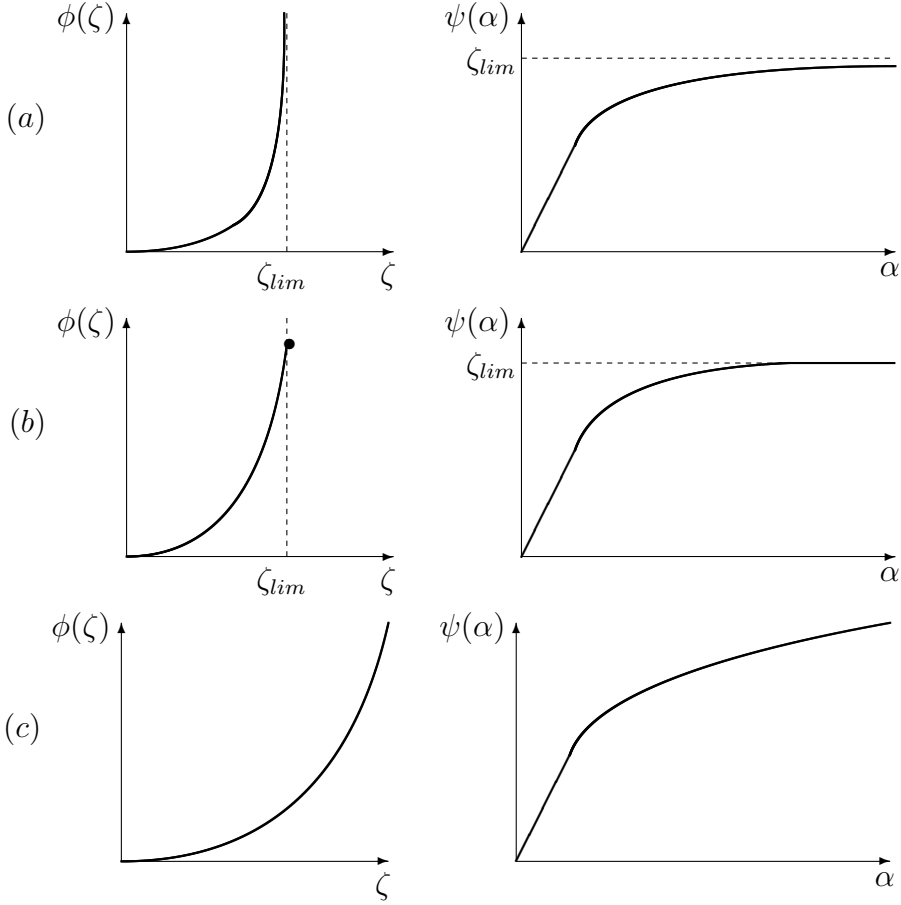

\section{Stress and displacement problems for given $\alpha\in\mathbb R_+$}
\label{sec_problems_alpha}

In this section, we formulate new variational problems  in terms of stresses and displacements enabling us to compute function values $\psi(\alpha)$ for $\alpha\in\mathbb R_+$. The parameter $\alpha$ will be used to control the loading process and to get the respective loading path $\mbox{graph}[\psi]$ for a larger class of yield functions than in \cite{SHHC13}.

To derive the formulation in terms of stresses, we introduce the following set:
$$\tilde\Lambda_L=\bigcup_{\tilde\zeta\in\mathbb R_+}\Lambda_{\tilde\zeta L}=\{\tilde\zeta\tau\ |\; \tilde\zeta\in\mathbb R_+,\;\tau\in\Lambda_L\}.$$
Clearly, $\tilde\Lambda_L$ is a closed, convex and non-empty subset of $S$ and for any $\tau\in\tilde\Lambda_L$ there exists a unique loading parameter $\tilde\zeta$ such that $\tau\in\Lambda_{\tilde\zeta L}$ owing to $(L2)$. To stress that $\tau\in\Lambda_{\tilde\zeta L}$ with $\tilde\zeta\in\mathbb R_+$ and using the fact that such $\tilde\zeta$ is unique, we shall write $\tilde\zeta=\omega(\tau)$ in what follows. It is readily seen that the function 
$$\omega:\tilde\Lambda_{L}\rightarrow\mathbb R_+$$
is concave in $\tilde\Lambda_{L}$ and satisfies the relation
$$\omega(\lambda\tau_1+(1-\lambda)\tau_2)=\lambda\omega(\tau_1)+(1-\lambda)\omega(\tau_2) \quad\forall \tau_1,\tau_2\in\tilde\Lambda_{ L},\;\forall\lambda\in[0,1].$$
Moreover,
\begin{equation}
\bigcup_{\tilde\zeta\in\mathcal D}\Lambda_{\tilde\zeta L}\cap P=\tilde\Lambda_{L}\cap P.
\label{tilde_lambda_cap_P}
\end{equation}

Let $\alpha>0$ be given and $\zeta=\psi(\alpha)$. Then,
\begin{eqnarray*}
\phi(\zeta)-\alpha\zeta&\stackrel{(\ref{zeta_min})}{=}&\inf_{\tilde\zeta\geq0}\{\phi(\tilde\zeta)-\alpha\tilde\zeta\}=\inf_{\tilde\zeta\in\mathcal D}\left\{\inf_{\tau\in{\Lambda_{\tilde\zeta L}}\cap P}\mathcal{I}(\tau)-\alpha\tilde\zeta\right\}=\\
&=&\inf_{\tilde\zeta\in\mathcal D}\inf_{\tau\in{\Lambda_{\tilde\zeta L}}\cap P}\left\{\mathcal{I}(\tau)-\alpha\omega(\tau)\right\}=\inf_{\tau\in{\tilde\Lambda_{L}}\cap P}\left\{\mathcal{I}(\tau)-\alpha\omega(\tau)\right\}
\end{eqnarray*}
using the definition of $\omega$ and (\ref{tilde_lambda_cap_P}).

On basis of this result we formulate the following problem in terms of stresses: given $\alpha\geq0$,
$$ (\mathcal P^*)^{\alpha}\qquad\mbox{find }\sigma:=\sigma(\alpha)\in\tilde\Lambda_L\cap P: \quad \mathcal{I}(\sigma)-\alpha\omega(\sigma) \leq \mathcal{I}(\tau)-\alpha\omega(\tau)\quad \forall\tau \in \tilde\Lambda_{L} \cap P.$$
Properties of the functions $\mathcal{I}$ and $\omega$ ensure that for any $\alpha\geq0$ problem $ (\mathcal P^*)^{\alpha}$ has a unique solution $\sigma$. Moreover, $\zeta=\psi(\alpha)=\omega(\sigma)$ and $\sigma$ also solves $(\mathcal P^*)_{\zeta}$. Conversely, if $\sigma$ is the unique solution to $(\mathcal P^*)_{\zeta}$, $\zeta\in\mathcal D\setminus\{0\}$,  then $\sigma$ also solves $ (\mathcal P^*)^{\alpha}$ for $\alpha\in\partial \phi(\zeta)$.

Now, we derive the dual problem to  $ (\mathcal P^*)^{\alpha}$ in terms of displacements for given $\alpha>0$. Let $\zeta=\psi(\alpha)>0$. Then,
\begin{eqnarray*}
\phi(\zeta)-\alpha\zeta&\stackrel{(\ref{zeta_min})}{=}&\inf_{\tilde\zeta\geq0}\{\phi(\tilde\zeta)-\alpha\tilde\zeta\}=\inf_{\tilde\zeta\geq0}\left\{\inf_{\tau\in{\Lambda_{\tilde\zeta L}}\cap P}\mathcal{I}(\tau)-\alpha\tilde\zeta\right\}=\\
&\stackrel{(\ref{duality})}{=}&\inf_{\tilde\zeta\geq0}\left\{\sup_{v\in\mathbb V}[-J_{\tilde\zeta}(v)]-\alpha\tilde\zeta\right\}=\\
&=&\inf_{\tilde\zeta\geq0}\sup_{v\in\mathbb V}\mathcal L(\tilde\zeta,v),
\end{eqnarray*}
where
$$\mathcal L(\tilde\zeta,v)=-\Psi(\varepsilon(v))+\tilde\zeta(L(v)-\alpha), \quad (\tilde\zeta,v)\in\mathbb R_+\times\mathbb V$$
and $\Psi$ is defined by (\ref{Psi}). From \cite[Proposition VI.2.3]{ET74}, it follows that
\begin{equation}
\phi(\zeta)-\alpha\zeta=\inf_{\tilde\zeta\geq0}\sup_{v\in\mathbb V}\mathcal L(\tilde\zeta,v)=\sup_{v\in\mathbb V}\inf_{\tilde\zeta\geq0}\mathcal L(\tilde\zeta,v)=-\inf_{v\in\mathbb V,\;L(v)\geq\alpha}\Psi(\varepsilon(v)).
\label{ET_label}
\end{equation}
Since $\Psi$ is convex on $S$ and $\Psi(0)=0$ it holds:
\begin{equation}
\inf_{v\in\mathbb V,\;L(v)\geq\alpha}\Psi(\varepsilon(v))=\inf_{v\in\mathbb V^\alpha}\Psi(\varepsilon(v)),
\label{inf_alpha}
\end{equation}
where
$$\mathbb V^\alpha=\{v\in\mathbb V\ |\;\; L(v)=\alpha\}.$$
Indeed, for any $v\in\mathbb V$, $L(v)>\alpha$, one can set $w=\frac{\alpha}{L(v)}v$ belonging to $\mathbb V^\alpha$ and satisfying
$$\Psi(\varepsilon(w))\leq\frac{\alpha}{L(v)}\Psi(\varepsilon(v))\leq\Psi(\varepsilon(v)).$$
Therefore, the problem in terms of displacements for given $\alpha>0$ reads as follows:
$$ (\mathcal P)^{\alpha}\qquad\mbox{find } u:=u(\alpha)\in\mathbb V^\alpha:\qquad\Psi(\varepsilon(u))\leq\Psi(\varepsilon(v))\qquad\forall v\in\mathbb V^\alpha.$$
This and (\ref{ET_label}) yield
$$\inf_{v\in\mathbb V^\alpha}\Psi(\varepsilon(v))=-\inf_{\tau \in \tilde\Lambda_{L} \cap P}[\mathcal{I}(\tau)-\alpha\omega(\tau)],$$
i.e., $(\mathcal P)^\alpha$ and $(\mathcal P^*)^\alpha$ are mutually dual. Notice that this result can also be derived using some parts of the proof of Lemma 5.2 in \cite{T85}.  Solvability of $(\mathcal P)^\alpha$ is problematic on $\mathbb V$ from the same reasons as in the case of $(\mathcal P)_\zeta$. However, this formulation is useful for numerical realization of its discretization. If we admit that $(\mathcal P)^\alpha$ has a solution for some $\alpha>0$ then the following result holds.

\begin{theorem}
Suppose that there exists a solution $u$ to $(\mathcal P)^\alpha$, $\alpha>0$. Then 
\begin{equation}
\zeta=\psi(\alpha)=\frac{1}{\alpha}\langle\Sigma(\varepsilon(u)), \varepsilon(u)\rangle.
\label{zeta_from_alpha}
\end{equation}
In addition, $u$ is the solution to $(\mathcal P)_\zeta$ and $\sigma=\Sigma(\varepsilon(u))$ is the solution to problems $(\mathcal P^*)^\alpha$ and $(\mathcal P^*)_\zeta$.

Conversely, if $u$ is a solution to $(\mathcal P)_\zeta$ then $u$ also solves $(\mathcal P)^\alpha$ for $\alpha=L(u)$.
\label{th_primal_alpha}
\end{theorem}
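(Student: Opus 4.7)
The plan is to extract a Lagrange multiplier for the isoperimetric constraint $L(v)=\alpha$, identify this multiplier with $\zeta=\psi(\alpha)$, and then use the convexity of $\Psi$ together with the duality machinery already set up in the preceding section.

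First, since $u\in\mathbb V^\alpha$ minimizes $v\mapsto\Psi(\varepsilon(v))$ subject to $L(v)=\alpha$ and $\Psi$ is Fr\'echet differentiable with $\mathbb D\Psi=\Sigma$, I would consider variations $u+tw$ with $w\in\mathbb V$ such that $L(w)=0$. Differentiating at $t=0$ yields $\langle\Sigma(\varepsilon(u)),\varepsilon(w)\rangle=0$ for every $w\in\ker L$. Using $(L_2)$ to fix some $w_0\in\mathbb V$ with $L(w_0)=1$ and decomposing an arbitrary $v\in\mathbb V$ as $v=(v-L(v)w_0)+L(v)w_0$, I obtain $\langle\Sigma(\varepsilon(u)),\varepsilon(v)\rangle=\zeta L(v)$ for all $v\in\mathbb V$, where $\zeta:=\langle\Sigma(\varepsilon(u)),\varepsilon(w_0)\rangle$. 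Setting $v=u$ and dividing by $\alpha$ gives $\zeta=\frac{1}{\alpha}\langle\Sigma(\varepsilon(u)),\varepsilon(u)\rangle$, which is (\ref{zeta_from_alpha}) provided that $\zeta$ is indeed $\psi(\alpha)$. Non-negativity of this $\zeta$ follows from $\Psi\geq 0$ (take $\tau=0$ in (\ref{Psi})), which forces $\langle\Sigma(e),e\rangle\geq\frac12\|\Sigma(e)\|_{C^{-1}}^2\geq 0$.

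Next, I would show that $u$ solves $(\mathcal P)_\zeta$ and $\sigma:=\Sigma(\varepsilon(u))$ solves $(\mathcal P^*)_\zeta$. Convexity of $\Psi$ together with the variational equation from the previous paragraph gives, for every $v\in\mathbb V$,
\[
\Psi(\varepsilon(v))\geq\Psi(\varepsilon(u))+\langle\Sigma(\varepsilon(u)),\varepsilon(v)-\varepsilon(u)\rangle=\Psi(\varepsilon(u))+\zeta L(v)-\zeta L(u),
\]
so $J_\zeta(v)\geq J_\zeta(u)$, i.e.\ $u$ solves $(\mathcal P)_\zeta$. The variational equation then coincides with (\ref{eqn_P}), and since $\sigma\in P$ by definition of $\Sigma=\Pi(C\,\cdot)$ and $\sigma\in\Lambda_{\zeta L}$ by the variational equation, $\sigma$ is the unique minimizer of $(\mathcal P^*)_\zeta$. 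In particular $\zeta\in\mathcal D$, and using $\Psi(\varepsilon(u))=-\mathcal I(\sigma)+\langle\sigma,\varepsilon(u)\rangle=-\mathcal I(\sigma)+\zeta\alpha$ combined with (\ref{ET_label})--(\ref{inf_alpha}), I see that $\sigma$ attains the infimum defining $(\mathcal P^*)^\alpha$; hence $\sigma$ solves $(\mathcal P^*)^\alpha$ and, by the correspondence established right after $(\mathcal P^*)^\alpha$, $\zeta=\omega(\sigma)=\psi(\alpha)$.

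For the converse, suppose $u$ solves $(\mathcal P)_\zeta$ and set $\alpha:=L(u)$. Then $u\in\mathbb V^\alpha$, and by (\ref{eqn_P}) combined with convexity of $\Psi$,
\[
\Psi(\varepsilon(v))\geq\Psi(\varepsilon(u))+\langle\sigma,\varepsilon(v)-\varepsilon(u)\rangle=\Psi(\varepsilon(u))+\zeta(L(v)-L(u))=\Psi(\varepsilon(u))
\]
for every $v\in\mathbb V^\alpha$, so $u$ minimizes $\Psi(\varepsilon(\cdot))$ on $\mathbb V^\alpha$ and thus solves $(\mathcal P)^\alpha$.

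The step I expect to require the most care is the extraction and justification of the Lagrange multiplier, because $\Psi$ has only linear growth at infinity and $(\mathcal P)^\alpha$ is in general not well-posed on $\mathbb V$; however, under the standing hypothesis that a minimizer $u$ exists, the elementary perturbation argument above bypasses any reliance on abstract multiplier theorems, and the remaining identifications are then essentially consequences of convex duality already packaged in (\ref{ET_label}).
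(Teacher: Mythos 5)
Your proof is correct, but it reaches the forward implication by a different route than the paper. The paper goes straight from the minimax equality (\ref{ET_label})--(\ref{inf_alpha}) to the statement that $(\zeta,u)$ with $\zeta=\psi(\alpha)$ is a saddle point of $\mathcal L$, and reads off (\ref{saddle_equiv}); this packages the variational equation \emph{and} the identification $\zeta=\psi(\alpha)$ in one step, at the price of invoking the saddle-point characterization. You instead extract the multiplier by hand: since $\mathbb V^\alpha$ is an affine translate of $\Ker L$ and $\Psi$ is Fr\'echet differentiable with derivative $\Sigma$, two-sided perturbations give $\langle\Sigma(\varepsilon(u)),\varepsilon(w)\rangle=0$ on $\Ker L$, and the decomposition via $w_0$ yields $\langle\Sigma(\varepsilon(u)),\varepsilon(v)\rangle=\zeta L(v)$ with $\zeta=\frac1\alpha\langle\Sigma(\varepsilon(u)),\varepsilon(u)\rangle\geq0$; convexity then gives that $u$ solves $(\mathcal P)_\zeta$, and the energy identity $\Psi(\varepsilon(u))=\zeta\alpha-\mathcal I(\sigma)$ together with (\ref{ET_label})--(\ref{inf_alpha}) shows $\sigma$ attains the infimum of $(\mathcal P^*)^\alpha$, whence $\zeta=\omega(\sigma)=\psi(\alpha)$ by the correspondence stated after $(\mathcal P^*)^\alpha$. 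This is more elementary in the multiplier-extraction step (no explicit saddle-point argument), but needs the extra identification step that the paper gets for free; the converse direction is essentially identical to the paper's. One sentence is too quick as written: membership $\sigma\in\Lambda_{\zeta L}\cap P$ alone does not make $\sigma$ the minimizer of $\mathcal I$ over that set; either cite the paper's Section~\ref{sec_setting} statement accompanying (\ref{eqn_P}) (now applicable since you have shown $u$ solves $(\mathcal P)_\zeta$), or add the one-line estimate that for any $\tau\in\Lambda_{\zeta L}\cap P$, (\ref{Psi}) gives $\zeta\alpha-\mathcal I(\sigma)=\Psi(\varepsilon(u))\geq\langle\tau,\varepsilon(u)\rangle-\mathcal I(\tau)=\zeta\alpha-\mathcal I(\tau)$, i.e.\ $\mathcal I(\sigma)\leq\mathcal I(\tau)$.
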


\begin{proof}
Let $u$ be a solution to $(\mathcal P)^\alpha$, $\alpha>0$ and $\zeta=\psi(\alpha)>0$. Then using (\ref{ET_label}), (\ref{inf_alpha}), the pair $(\zeta,u)$ is a saddle point of the Lagrangian $\mathcal L$:
$$\mathcal L(\zeta,v)\leq\mathcal L(\zeta,u)\leq\mathcal L(\tilde\zeta,u)\quad\forall (\tilde\zeta,v)\in\mathbb R_+\times\mathbb V,$$ 
or equivalently
\begin{equation}
\left\{
\begin{array}{l}
L(u)=\alpha,\;\;\zeta>0,\\
\langle\Sigma(\varepsilon(u)),\varepsilon(v)\rangle=\zeta L(v)\quad\forall v\in \mathbb V,
\end{array}
\right.
\label{saddle_equiv}
\end{equation}
i.e. $u$ solves $(\mathcal P)_\zeta$. Consequently, $\sigma=\Sigma(\varepsilon(u))$ solves $(\mathcal P^*)_\zeta$ and also $(\mathcal P^*)^\alpha$. Moreover, inserting $v=u$ into (\ref{saddle_equiv})$_2$, we obtain (\ref{zeta_from_alpha}).

Conversely, let $u$ be a solution to $(\mathcal P)_\zeta$ for $\zeta\in\mathcal D$ and denote $\alpha:=L(u)$. Then $u\in\mathbb V^\alpha$ and
$$\Psi(\varepsilon(u))=J_\zeta(u)+\zeta\alpha\leq\inf_{v\in V^\alpha}J_\zeta(v)+\zeta\alpha=\inf_{v\in V^\alpha}\Psi(\varepsilon(v)).$$
Hence, $u$ is the solution to $(\mathcal P)^\alpha$.
\end{proof}

\begin{remark}
\emph{Theorem \ref{th_primal_alpha} expresses the relation between $\zeta$ and $\alpha$ through displacements. If $u$ is a solution to $(\mathcal P)_\zeta$ then $\alpha=L(u)$. Therefore, one can say that $\alpha$ represents work of external forces. The equality $\alpha=L(u)$ is in accordance with \cite{CHKS14, SHHC13}.}
\end{remark}

\section{Discretization and convergence analysis}
\label{sec_discretization}

\subsection{Setting of discretized problems}

For the sake of simplicity,  we now suppose that $\Omega$ is a {\it polyhedral} domain. Let $\left\{ \mathcal{T}_h \right\},\ h> 0$ be a collection of regular partitions of $\overline\Omega$ into tetrahedrons $\triangle$ which are consistent with the decomposition of $\partial\Omega$ into $\Gamma_D$ and $\Gamma_N$. Here, $h$ is a positive mesh size parameter. With any $\mathcal{T}_h$ we associate the following finite-dimensional spaces:
$$
  \mathbb{V}_h = \{ v_h \in C(\overline{\Omega};\mathbb R^3)\;\; |\quad v_h|_\triangle \in P_1(\triangle;\mathbb R^3)\;\; \forall \triangle \in \mathcal{T}_h,\quad v_h=0\ \mbox{on } \Gamma_D \}{,}
$$
$$
  S_h = \{ \tau_h \in S\quad |\quad \tau_h|_\triangle \in P_0(\triangle;\mathbb R^{3\times 3}_{sym})\quad \forall \triangle\in \mathcal{T}_h \},
$$
where $P_k(\triangle),\ k\geq 0$ integer, stands for the space of all polynomials of degree less or equal $k$ defined in $\triangle \in \mathcal{T}_h$. The spaces $\mathbb{V}_h$ and $S_h$ are the simplest finite element approximations of $\mathbb{V}$ and $S$, respectively. Next we shall suppose that 
$\overline{\mathbb V\cap C^\infty(\overline\Omega;\mathbb R^3)}=\mathbb V.$
 Further, define the following convex sets:
$$
 P_h=P\cap S_h,
$$
$$
  \Lambda^h_{\zeta L} = \left\{\tau_h\in S_h\quad |\quad \langle \tau_h, \varepsilon(v_h)\rangle =\zeta L(v_h)\quad \forall v_h\in \mathbb V_h \right\},\quad\zeta\geq0,
$$
$$
  \tilde\Lambda^h_{L} =\bigcup_{\zeta\in\mathbb R_+}\Lambda_{\zeta L}^h,
$$
$$\mathbb V_h^\alpha=\{v_h\in\mathbb V_h\;|\;\; L(v_h)=\alpha\},\quad\alpha\geq0,$$
$$\mathcal D_h:=\{\zeta\in\mathbb R_+\ |\;  \Lambda_{\zeta L}^h\cap P_h \not= \emptyset\},$$
which are natural discretizations of $P$, $\Lambda_{\zeta L}$, $\tilde\Lambda_L$, $\mathbb V^\alpha$, and $\mathcal D$, respectively. 
We also consider the functions $\phi_h$, $\psi_h$, $\omega_h$ and the limit load parameter $\zeta_{lim,h}$ with the analogous definitions and properties as their continuous counterparts.

The discrete versions of $(\mathcal{P}^*)_\zeta$,  $(\mathcal{P}^*)^\alpha$, $(\mathcal{P})_\zeta$,  $(\mathcal{P})^\alpha$ for given $\zeta\geq0$ or $\alpha\geq0$ read as follows: 
\begin{equation*}
(\mathcal{P}^{*}_h)_\zeta\qquad
   \mbox{find }\sigma_h:=\sigma_h(\zeta)\in\Lambda_{\zeta L}^h \cap P_h: \quad \mathcal{I}(\sigma_h) \leq \mathcal{I}(\tau_h) ,\quad \forall\tau_h \in \Lambda_{\zeta L}^h \cap P_h,
\end{equation*}
$$ (\mathcal P^*_h)^{\alpha}\qquad\mbox{find }\sigma_h:=\sigma_h(\alpha)\in\tilde\Lambda_L^h\cap P_h: \quad \mathcal{I}(\sigma_h)-\alpha\omega_h(\sigma_h) \leq \mathcal{I}(\tau_h)-\alpha\omega_h(\tau_h) ,\quad \forall\tau_h \in \tilde\Lambda_{L}^h \cap P_h,$$
\begin{equation*}
(\mathcal{P}_h)_\zeta\qquad
   \mbox{find } u_h:=u_h(\zeta)\in\mathbb V_h: \quad J_\zeta(u_h)\leq J_\zeta(v_h),\quad\forall v_h\in \mathbb V_h,
\end{equation*}
$$ (\mathcal P_h)^{\alpha}\qquad\mbox{find } u_h:=u_h(\alpha)\in\mathbb V_h^\alpha:\qquad\Psi(\varepsilon(u_h))\leq\Psi(\varepsilon(v_h))\qquad\forall v_h\in\mathbb V_h^\alpha.$$
Clearly problems $(\mathcal{P}_h^*)_\zeta$ and $ (\mathcal P^*_h)^{\alpha}$ have unique solutions for any $\zeta\in\mathcal D_h$, $\alpha\geq0$ and $h>0$. Further, the existence of solutions to $(\mathcal{P}_h)_\zeta$ and $ (\mathcal P_h)^{\alpha}$ is guaranteed for any $\zeta\in[0,\zeta_{lim,h})$, $\alpha\geq0$ and $h>0$, see e.g. \cite{FG83, SHHC13}. The mutual relations among the solutions to these problems remain the same as in the continuous setting. The relation between $\zeta$  and $\alpha$ is defined using the functions $\phi_h$ and $\psi_h$, analogously to the continuous case: $\alpha\in\partial\phi_h(\zeta)$ if $\zeta\in\mathcal D_h\setminus\{0\}$, $\alpha=0$ if $\zeta=0$ and $\zeta=\psi_h(\alpha)$. In particular,
\begin{equation}
\zeta=\psi_h(\alpha)=\frac{1}{\alpha}\langle\Sigma(\varepsilon(u_h)), \varepsilon(u_h)\rangle,
\label{zeta_from_alpha_h}
\end{equation}
where $u_h$ is any solution to $(\mathcal{P}_h)^\alpha$. It is worth noticing that (\ref{zeta_from_alpha_h}) enables us to express $\zeta$ elementwise: $$\zeta=\sum_{\triangle\in\mathcal T_h}\zeta_\triangle,\quad \zeta_\triangle=\frac{|\triangle|}{\alpha}\Sigma(\varepsilon(u_h)|_\triangle): \varepsilon(u_h)|_\triangle.$$

\subsection{Convergence analysis}

In what follows, we study convergence of $(\mathcal{P}_h^*)_\zeta$, $(\mathcal P^*_h)^{\alpha}$ and $\psi_h$ to their continuous counterparts when the discretization parameter $h\rightarrow0_+$. To this end we need the following well-known results \cite{M77, HHl82}.

\begin{lemma}
For any $v\in\mathbb V$ there exists a sequence $\{v_h\}$, $v_h\in\mathbb V_h$ such that $v_h\rightarrow v$ in $\mathbb V$ as $h\rightarrow+\infty$.
\label{lem_V_h_conv}
\end{lemma}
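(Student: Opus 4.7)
The plan is a standard density argument combining the hypothesis $\overline{\mathbb{V}\cap C^\infty(\overline{\Omega};\mathbb{R}^3)}=\mathbb{V}$ with Lagrange interpolation on the family $\{\mathcal{T}_h\}$. Note first that the claimed convergence is as $h\to 0_+$ (the statement has an obvious typo). I would proceed in two layers: first approximate an arbitrary $v\in\mathbb{V}$ by smooth functions vanishing on $\Gamma_D$, then interpolate those smooth functions onto $\mathbb{V}_h$, and finally extract a diagonal sequence.

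More concretely, given $v\in\mathbb{V}$ and $\varepsilon>0$, by the stated density assumption there exists $w^\varepsilon\in\mathbb{V}\cap C^\infty(\overline{\Omega};\mathbb{R}^3)$ with $\|v-w^\varepsilon\|_{H^1(\Omega;\mathbb{R}^3)}<\varepsilon$. Because $w^\varepsilon$ is continuous on $\overline{\Omega}$, the componentwise piecewise-linear Lagrange interpolant $\Pi_h w^\varepsilon\in C(\overline{\Omega};\mathbb{R}^3)$ is well defined on each mesh $\mathcal{T}_h$ and belongs to the $P_1$ conforming space. Since $w^\varepsilon=0$ on $\Gamma_D$ and the partitions $\mathcal{T}_h$ are, by construction, consistent with the decomposition $\partial\Omega=\overline{\Gamma}_D\cup\overline{\Gamma}_N$, all Lagrange nodes on $\Gamma_D$ receive the value $0$, so $\Pi_h w^\varepsilon\in \mathbb{V}_h$. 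The classical interpolation estimate on shape-regular simplicial meshes (see e.g.\ Ciarlet) gives
\begin{equation*}
\|\Pi_h w^\varepsilon - w^\varepsilon\|_{H^1(\Omega;\mathbb{R}^3)} \leq C\,h\,|w^\varepsilon|_{H^2(\Omega;\mathbb{R}^3)},
\end{equation*}
with a constant $C$ independent of $h$ (but depending on the shape-regularity of $\{\mathcal{T}_h\}$).

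Combining the two estimates via the triangle inequality, for every fixed $\varepsilon>0$,
\begin{equation*}
\|v-\Pi_h w^\varepsilon\|_{H^1(\Omega;\mathbb{R}^3)} \leq \varepsilon + C\,h\,|w^\varepsilon|_{H^2(\Omega;\mathbb{R}^3)} ,
\end{equation*}
so $\limsup_{h\to 0_+}\|v-\Pi_h w^\varepsilon\|_{H^1}\leq \varepsilon$. A standard diagonal extraction now yields a sequence $\varepsilon(h)\to 0$ slow enough that $C\,h\,|w^{\varepsilon(h)}|_{H^2}\to 0$, and setting $v_h:=\Pi_h w^{\varepsilon(h)}\in\mathbb{V}_h$ produces $\|v-v_h\|_{H^1}\to 0$.

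The only real point requiring care is the boundary condition: one must know that Lagrange interpolation preserves the vanishing trace on $\Gamma_D$, which is why the assumption that $\mathcal{T}_h$ is consistent with the decomposition of $\partial\Omega$ is essential. Everything else is routine finite-element approximation theory; I expect the argument to be short, and the density hypothesis on $\mathbb{V}\cap C^\infty(\overline{\Omega};\mathbb{R}^3)$ to do all the heavy lifting needed to bypass having only $H^1$-regularity of $v$.
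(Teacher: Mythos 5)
Your argument is correct, and it is the standard one. Note that the paper itself does not prove Lemma \ref{lem_V_h_conv}: it is quoted as a well-known result with references to Mercier and to Haslinger--Hlav\'a\v{c}ek, so there is no in-paper proof to compare against. What you wrote is precisely the classical density-plus-interpolation argument those references rely on: use the explicitly stated hypothesis $\overline{\mathbb V\cap C^\infty(\overline\Omega;\mathbb R^3)}=\mathbb V$ to reduce to a smooth $w^\varepsilon$ vanishing on $\Gamma_D$, apply the $P_1$ Lagrange interpolant on the shape-regular meshes with the estimate $\|\Pi_h w^\varepsilon-w^\varepsilon\|_{H^1}\leq Ch\,|w^\varepsilon|_{H^2}$, and diagonalize. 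You also correctly identified the two points that need care: the misprint $h\rightarrow+\infty$ (it should be $h\rightarrow0_+$), and the preservation of the homogeneous condition on $\Gamma_D$, which indeed follows because $w^\varepsilon$, being continuous and zero on $\Gamma_D$, vanishes on $\overline{\Gamma}_D$, so all interpolation nodes of faces lying on $\Gamma_D$ get the value zero thanks to the assumed compatibility of $\mathcal T_h$ with the partition of $\partial\Omega$; hence $\Pi_h w^\varepsilon\in\mathbb V_h$. No gaps.
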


\begin{lemma}
Let  $r_h:\ S \rightarrow S_h$ be the orthogonal projection of $S$ on $S_h$ with respect to the scalar product $\langle\cdot,\cdot\rangle$, i.e.,
$$
 r_h\tau|_\triangle = \dfrac{1}{|\triangle|} \int_\triangle\tau\, {\mbox{d}}x\qquad \forall \triangle\in \mathcal{T}_h \quad \forall \tau\in S.
$$
Then $ r_h\tau\in P_h$ for any $\tau\in  P$,  $r_h\tau\in \Lambda_{\zeta L}^h$ for any $\tau\in  \Lambda_{\zeta L},\;\zeta\geq0$ and
\begin{equation*} \label{equ.3.1}
  r_h\tau \rightarrow \tau\quad \mbox{in } S {\mbox{ as }} h \rightarrow 0{+}.
\end{equation*}
\label{lem_r_h}
\end{lemma}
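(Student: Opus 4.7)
The lemma has three parts to establish: $(i)$ $r_h$ maps $P$ into $P_h$, $(ii)$ $r_h$ maps $\Lambda_{\zeta L}$ into $\Lambda^h_{\zeta L}$, and $(iii)$ $r_h\tau\to\tau$ strongly in $S$. My plan is to handle each one separately, exploiting that $r_h\tau$ is simply the elementwise $L^2$-average of $\tau$ on each tetrahedron and is, at the same time, the $\langle\cdot,\cdot\rangle$-orthogonal projection onto the piecewise constant space $S_h$.

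For $(i)$, I would fix $\tau\in P$ and $\triangle\in\mathcal T_h$. Then $r_h\tau|_\triangle$ is a convex combination (the normalized integral) of values $\tau(x)$ that, for almost all $x\in\triangle$, lie in the convex set $\{\eta\in\mathbb R^{3\times 3}_{sym}\,|\,\Phi(\eta)\le\gamma\}$. Applying Jensen's inequality to the convex continuous yield function $\Phi$ gives $\Phi(r_h\tau|_\triangle)\le\frac{1}{|\triangle|}\int_\triangle \Phi(\tau)\,dx\le\gamma$, so $r_h\tau\in P_h$. For $(ii)$, the key observation is that for any $v_h\in\mathbb V_h$ the strain $\varepsilon(v_h)$ is piecewise constant, hence $\varepsilon(v_h)\in S_h$. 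By the orthogonal-projection property of $r_h$, $\langle r_h\tau,\varepsilon(v_h)\rangle=\langle\tau,\varepsilon(v_h)\rangle=\zeta L(v_h)$, where the last equality uses $\tau\in\Lambda_{\zeta L}$ and $\mathbb V_h\subset\mathbb V$. Thus $r_h\tau\in\Lambda^h_{\zeta L}$.

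For $(iii)$, I would first establish the convergence for a dense subclass and then extend. Since $r_h$ is an orthogonal projection, $\|r_h\tau\|\le\|\tau\|$, which gives uniform boundedness of $\{r_h\}$. For $\tau\in C^0(\overline\Omega;\mathbb R^{3\times 3}_{sym})$, uniform continuity together with the regularity of $\{\mathcal T_h\}$ (so that $\mathrm{diam}(\triangle)\le h\to 0$) shows $\|r_h\tau-\tau\|_{L^\infty}\to 0$, hence $r_h\tau\to\tau$ in $S$. For a general $\tau\in S$, given $\varepsilon>0$ I would pick a continuous $\tilde\tau$ with $\|\tau-\tilde\tau\|<\varepsilon$, and then
\[
\|r_h\tau-\tau\|\le\|r_h(\tau-\tilde\tau)\|+\|r_h\tilde\tau-\tilde\tau\|+\|\tilde\tau-\tau\|\le 2\varepsilon+\|r_h\tilde\tau-\tilde\tau\|,
\]
which is $<3\varepsilon$ for $h$ small. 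The main obstacle is really notational rather than conceptual: parts $(i)$ and $(ii)$ are one-liners once one realizes that averaging preserves convexity constraints and orthogonality kills the equilibrium test against $\varepsilon(v_h)\in S_h$; part $(iii)$ is a classical density argument that relies only on the $L^2$-contractivity of $r_h$ and uniform continuity on compacta.
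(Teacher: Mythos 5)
Your proof is correct. Note that the paper itself offers no proof of this lemma: it is quoted as a well-known result with a pointer to \cite{M77, HHl82}, so there is nothing in-paper to compare against, and your three steps supply precisely the standard argument behind those references — Jensen/convexity of $\Phi$ for $r_h\tau\in P_h$, the orthogonality of $r_h$ tested against the piecewise constant strains $\varepsilon(v_h)\in S_h$ for $r_h\tau\in\Lambda^h_{\zeta L}$, and contractivity of the projection plus density of continuous fields for the strong convergence. Two minor points worth making explicit: in part $(i)$ the bound $\Phi(r_h\tau|_\triangle)\le\frac{1}{|\triangle|}\int_\triangle\Phi(\tau)\,dx\le\gamma$ uses that $\gamma$ is constant in $\Omega$ (the paper's homogeneity assumption); equivalently one may simply say that the mean value of a function taking values in the closed convex set $\{\eta\in\mathbb{R}^{3\times3}_{sym}\,|\,\Phi(\eta)\le\gamma\}$ remains in that set. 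In part $(iii)$ you should state that the regularity of the family $\{\mathcal T_h\}$ and the meaning of the mesh parameter give $\max_{\triangle\in\mathcal T_h}\operatorname{diam}(\triangle)\rightarrow 0$ as $h\rightarrow 0_+$, since that is what the uniform-continuity estimate for $\tilde\tau\in C(\overline\Omega;\mathbb{R}^{3\times3}_{sym})$ actually consumes; with that said, the density argument is complete.
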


\begin{corollary}
$\zeta_{lim,h}\geq\zeta_{lim}$ for any $h>0.$
\end{corollary}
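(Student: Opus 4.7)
The plan is to show directly that $\mathcal D \subseteq \mathcal D_h$, from which the inequality follows by taking suprema. The whole argument hinges on the piecewise-constant orthogonal projection $r_h$ introduced in Lemma \ref{lem_r_h}, which preserves both constraints that define $\mathcal D$.

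First I would take an arbitrary $\zeta \in \mathcal D$. By definition there exists $\tau \in \Lambda_{\zeta L} \cap P$. Applying Lemma \ref{lem_r_h} to such a $\tau$, I obtain $r_h\tau \in P_h$ (because $\tau \in P$) and $r_h\tau \in \Lambda_{\zeta L}^h$ (because $\tau \in \Lambda_{\zeta L}$). In particular $\Lambda_{\zeta L}^h \cap P_h \neq \emptyset$, so $\zeta \in \mathcal D_h$. This proves $\mathcal D \subseteq \mathcal D_h$, and taking the supremum gives
$$\zeta_{lim} = \sup \mathcal D \leq \sup \mathcal D_h = \zeta_{lim,h}.$$

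There is essentially no obstacle here since everything needed is packaged into Lemma \ref{lem_r_h}. The only conceptual point worth underlining is the geometric reason why $r_h$ works: statical admissibility is preserved because $\varepsilon(v_h)$ is piecewise constant for $v_h \in \mathbb V_h$, hence lies in $S_h$, so $\langle r_h\tau, \varepsilon(v_h)\rangle = \langle \tau, \varepsilon(v_h)\rangle = \zeta L(v_h)$ by the defining property of the orthogonal projection; plastic admissibility is preserved because $r_h\tau|_\triangle$ is the mean value of $\tau$ over $\triangle$ and the convexity of $\Phi$ yields $\Phi(r_h\tau|_\triangle) \leq \gamma$ via Jensen's inequality. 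Both facts are already built into the statement of Lemma \ref{lem_r_h}, so the corollary reduces to a one-line inclusion argument.
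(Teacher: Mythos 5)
Your proof is correct and follows exactly the paper's argument: establish $\mathcal D \subseteq \mathcal D_h$ via the projection $r_h$ from Lemma \ref{lem_r_h} and then take suprema. The extra explanation of why $r_h$ preserves static and plastic admissibility is sound but already packaged in that lemma, so nothing further is needed.
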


\begin{proof}
It is sufficient to show that $\mathcal D\subset\mathcal D_h$ for any $h>0.$ If $\zeta\in\mathcal D$ then there exists $\tau\in \Lambda_{\zeta L}\cap P$. From Lemma \ref{lem_r_h}, $r_h\tau\in\Lambda_{\zeta L}^h\cap P_h$ for any $h>0$. Therefore, $\zeta\in\mathcal D_h$  for any $h>0.$
\end{proof}

\begin{lemma}
Let $\tau\in S$ and $\{\tau_h\}$, $\tau_h\in S_h$ be a sequence such that $\tau_h\in \Lambda^h_{\zeta L}\cap P_h$, $\zeta\geq0$ and $\tau_h \rightharpoonup \tau$  (weakly) in  $S$ as $h\rightarrow0_+$. Then $\tau\in \Lambda_{\zeta L}\cap P$.
\label{lem_density}
\end{lemma}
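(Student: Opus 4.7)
The statement decomposes into two independent claims: (a) $\tau\in P$ and (b) $\tau\in\Lambda_{\zeta L}$. My plan is to treat them separately.

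For part (a), the strategy is to observe that $P$ is a closed convex subset of $S$, hence weakly closed. Indeed, $P$ is convex because $\Phi$ is convex, and it is strongly closed because if $\rho_n\to\rho$ in $S$ with $\Phi(\rho_n(x))\le\gamma$ a.e., then along a subsequence $\rho_n\to\rho$ pointwise a.e., and continuity of $\Phi$ gives $\Phi(\rho(x))\le\gamma$ a.e. Since $P_h\subset P$ by definition and $\tau_h\rightharpoonup\tau$ in $S$, the conclusion $\tau\in P$ is immediate from weak closedness.

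For part (b), I would fix an arbitrary $v\in\mathbb V$ and, by Lemma \ref{lem_V_h_conv}, choose $v_h\in\mathbb V_h$ with $v_h\to v$ in $\mathbb V$ as $h\to 0_+$. Then $\varepsilon(v_h)\to\varepsilon(v)$ strongly in $S$, and by assumption $(L_1)$ the functional $L$ is continuous on $\mathbb V$, so $L(v_h)\to L(v)$. Combining strong convergence of $\varepsilon(v_h)$ with the weak convergence $\tau_h\rightharpoonup\tau$ in $S$ yields
\[
\langle\tau_h,\varepsilon(v_h)\rangle\to\langle\tau,\varepsilon(v)\rangle.
\]
On the other hand, $\tau_h\in\Lambda^h_{\zeta L}$ gives $\langle\tau_h,\varepsilon(v_h)\rangle=\zeta L(v_h)$ for every $h$. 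Passing to the limit, we obtain $\langle\tau,\varepsilon(v)\rangle=\zeta L(v)$ for all $v\in\mathbb V$, which is exactly $\tau\in\Lambda_{\zeta L}$.

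Neither part is particularly delicate, but the point that most deserves care is the weak closedness of $P$ in part (a), since $P$ is defined through a pointwise (a.e.) constraint on the values of the tensor field: the passage from the pointwise constraint to a norm-closed (and hence, by convexity, weakly closed) subset of $S$ is the only nontrivial ingredient. Everything else is bilinear pairing of a weakly convergent sequence with a strongly convergent one, combined with the density property supplied by Lemma \ref{lem_V_h_conv}.
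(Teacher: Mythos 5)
Your proof is correct and takes essentially the same route as the paper: the paper states Lemma \ref{lem_density} as a well-known result (citing the literature) but proves the analogous Lemma \ref{lem_density2} with exactly your argument --- weak closedness of the closed convex set $P$, plus passing to the limit in $\langle\tau_h,\varepsilon(v_h)\rangle=\zeta L(v_h)$ using the strongly convergent recovery sequence $v_h\rightarrow v$ from Lemma \ref{lem_V_h_conv}. No gaps; your extra detail on why $P$ is strongly closed (a.e.\ convergent subsequence and continuity of $\Phi$) is a welcome elaboration of what the paper takes for granted.
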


The following convergence result is a direct consequence of Lemmas \ref{lem_V_h_conv}--\ref{lem_density}.
\begin{theorem}
Let $\zeta\in\mathcal D$ and $\sigma_h$ be a solution to $(\mathcal{P}^*_h)_\zeta$, $h\rightarrow0_+$. Then 
$$\sigma_h\rightarrow\sigma\;\;\mbox{in }S,\;\;h\rightarrow0_+,$$ 
$$\phi_h(\zeta)\rightarrow\phi(\zeta),\;\;h\rightarrow0_+,$$
where $\sigma\in \Lambda_{\zeta L}\cap P$ is the unique solution to $(\mathcal{P}^*)_\zeta$.
\label{thm_zeta_conv}
\end{theorem}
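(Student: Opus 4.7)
The plan is to follow the standard Mosco/Glowinski template for convergence of a minimization problem over varying closed convex subsets of a Hilbert space. Concretely, I would prove boundedness of $\{\sigma_h\}$, extract a weakly convergent subsequence, identify the weak limit with $\sigma$ via the density result of Lemma \ref{lem_r_h} and the closedness property of Lemma \ref{lem_density}, upgrade to strong convergence using convergence of norms, and finally read off convergence of $\phi_h(\zeta)$.

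First, I would obtain boundedness. Fix any $\tau \in \Lambda_{\zeta L}\cap P$ (such $\tau$ exists because $\zeta\in\mathcal D$). By Lemma \ref{lem_r_h}, $r_h\tau \in \Lambda^h_{\zeta L}\cap P_h$ and $r_h\tau \to \tau$ in $S$. The minimality of $\sigma_h$ in $(\mathcal P^*_h)_\zeta$ gives
\begin{equation*}
  \mathcal I(\sigma_h) \le \mathcal I(r_h\tau),
\end{equation*}
and since $\|r_h\tau\|$ is bounded, so is $\|\sigma_h\|_{C^{-1}}$, hence $\|\sigma_h\|$ (by equivalence of the two norms). Extract a subsequence, still denoted $\sigma_h$, with $\sigma_h \rightharpoonup \bar\sigma$ in $S$. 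By Lemma \ref{lem_density}, $\bar\sigma \in \Lambda_{\zeta L}\cap P$.

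Next I would identify $\bar\sigma$ with the unique solution of $(\mathcal P^*)_\zeta$. For an arbitrary $\tau\in\Lambda_{\zeta L}\cap P$, the above minimality combined with $r_h\tau \to \tau$ in $S$ (so $\mathcal I(r_h\tau)\to\mathcal I(\tau)$) and weak lower semicontinuity of the quadratic functional $\mathcal I$ with respect to the $S$--topology yields
\begin{equation*}
  \mathcal I(\bar\sigma) \le \liminf_{h\to 0_+}\mathcal I(\sigma_h) \le \limsup_{h\to 0_+}\mathcal I(\sigma_h) \le \mathcal I(\tau).
\end{equation*}
Since $\bar\sigma \in \Lambda_{\zeta L}\cap P$ and $\tau$ was arbitrary there, $\bar\sigma$ solves $(\mathcal P^*)_\zeta$; by uniqueness, $\bar\sigma=\sigma$. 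Because the limit is unique, the entire sequence $\sigma_h$ converges weakly to $\sigma$.

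The step I expect to be slightly more delicate is the upgrade to strong convergence, although it is clean in this setting because $\mathcal I$ is a Hilbert squared norm (in the inner product $\langle C^{-1}\cdot,\cdot\rangle$). Taking $\tau=\sigma$ in the display above pins both $\liminf$ and $\limsup$ to $\mathcal I(\sigma)$, so $\|\sigma_h\|_{C^{-1}}\to \|\sigma\|_{C^{-1}}$. Combined with $\sigma_h \rightharpoonup \sigma$ in the Hilbert space $(S,\langle C^{-1}\cdot,\cdot\rangle)$, convergence of norms plus weak convergence gives strong convergence, i.e.\ $\sigma_h\to\sigma$ in $S$ (using equivalence of $\|\cdot\|_{C^{-1}}$ and $\|\cdot\|$). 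Finally, $\phi_h(\zeta)=\mathcal I(\sigma_h) \to \mathcal I(\sigma)=\phi(\zeta)$ is immediate from continuity of $\mathcal I$ with respect to strong convergence, which completes the proof.
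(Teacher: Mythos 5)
Your proof is correct and follows essentially the argument the paper intends: the theorem is stated there as a direct consequence of Lemmas \ref{lem_V_h_conv}--\ref{lem_density}, and your three steps (boundedness via $\mathcal I(\sigma_h)\le\mathcal I(r_h\tau)$, identification of the weak limit through Lemma \ref{lem_density} and weak lower semicontinuity, and strong convergence from norm convergence in the $C^{-1}$-inner product) are exactly the scheme the authors spell out for the analogous Theorem \ref{thm_alpha_conv}. No gaps; the concluding statement $\phi_h(\zeta)=\mathcal I(\sigma_h)\to\mathcal I(\sigma)=\phi(\zeta)$ follows as you say.
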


To prove convergence of solutions of $(\mathcal{P}^*_h)^\alpha$ to a solution of $(\mathcal{P}^*)^\alpha$, we need some other auxilliary results.
\begin{lemma}
For any $v\in\mathbb V^{\alpha=1}$, there exists a sequence $\{w_h\}$, $w_h\in\mathbb V_h^{\alpha=1}$ such that $w_h\rightarrow v$ in $\mathbb V$ as $h\rightarrow+\infty$.
\label{lem_V_h^1_conv}
\end{lemma}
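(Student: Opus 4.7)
The plan is to reduce the claim to the density statement of Lemma \ref{lem_V_h_conv} by a simple rescaling argument, exploiting the fact that $L$ is a bounded linear functional on $\mathbb V$.

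First I would apply Lemma \ref{lem_V_h_conv} directly to the given $v \in \mathbb V^{\alpha=1}$, obtaining a sequence $\{v_h\}$, $v_h \in \mathbb V_h$, such that $v_h \to v$ in $\mathbb V$ as $h \to 0_+$. Since $L$ is a continuous linear functional on $\mathbb V$ (assumption $(L_1)$ ensures $F \in L^2(\Omega;\mathbb R^3)$ and $f \in L^2(\Gamma_N;\mathbb R^3)$), this convergence yields $L(v_h) \to L(v) = 1$. In particular, there is some $h_0 > 0$ such that $L(v_h) > 1/2$ for every $h \in (0,h_0]$.

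Next I would define the rescaled sequence $w_h := v_h / L(v_h) \in \mathbb V_h$ for $h \in (0,h_0]$ (and set $w_h$ arbitrarily, for instance $w_h = w_{h_0}$, for $h > h_0$; this part is irrelevant since we only care about the limit $h \to 0_+$). By construction $L(w_h) = 1$, hence $w_h \in \mathbb V_h^{\alpha=1}$. To verify strong convergence, I would estimate
$$\|w_h - v\|_{\mathbb V} \leq \|w_h - v_h\|_{\mathbb V} + \|v_h - v\|_{\mathbb V} = \left|\frac{1}{L(v_h)} - 1\right| \|v_h\|_{\mathbb V} + \|v_h - v\|_{\mathbb V}.$$
Since $\{v_h\}$ is convergent in $\mathbb V$, the norms $\|v_h\|_{\mathbb V}$ are uniformly bounded; combined with $L(v_h) \to 1$ and $v_h \to v$ in $\mathbb V$, the right-hand side tends to $0$ as $h \to 0_+$.

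There is essentially no obstacle here: the only point that needs attention is the nondegeneracy of the denominator $L(v_h)$, which is guaranteed for all sufficiently small $h$ precisely because $L(v) = 1 > 0$. Note that this argument crucially uses $\alpha = 1 \neq 0$; for $\alpha = 0$ one would instead work in $\mathrm{Ker}\, L$, which would require a more careful density statement. Since the lemma only concerns the case $\alpha = 1$, the rescaling argument suffices.
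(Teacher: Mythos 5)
Your argument is correct and coincides with the paper's own proof: both take the sequence $\{v_h\}$ from Lemma \ref{lem_V_h_conv} and rescale it as $w_h=v_h/L(v_h)$, using continuity of $L$ and $L(v)=1$ to ensure the denominator is eventually bounded away from zero. Your version merely spells out the norm estimate that the paper leaves implicit.
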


\begin{proof}
Let $v\in\mathbb V^{\alpha=1}$ and $\{v_h\}$, $v_h\in\mathbb V_h$ be a sequence such that $v_h\rightarrow v$ in $\mathbb V$ as $h\rightarrow+\infty$. Then, $L(v_h)\rightarrow L(v)=1$ as $h\rightarrow0_+$ and $w_h=\frac{1}{L(v_h)}v_h\in\mathbb V_h^{\alpha=1}$ has the required property.
\end{proof}

\begin{lemma}
There exists a constant $c>0$ such that for any sufficiently small $h>0$
$$\omega_h(\tau_h)\leq c\|\tau_h\|_{C^{-1}}\quad\forall \tau_h\in\tilde\Lambda_L^h.$$
\label{omega_bound}
\end{lemma}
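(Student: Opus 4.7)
The plan is to exploit the defining equilibrium identity of $\tilde\Lambda_L^h$. Any $\tau_h\in\tilde\Lambda_L^h$ satisfies $\tau_h\in\Lambda^h_{\omega_h(\tau_h)L}$ by construction, hence
$$\langle\tau_h,\varepsilon(v_h)\rangle=\omega_h(\tau_h)\,L(v_h)\qquad\forall v_h\in\mathbb V_h.$$
Thus, if I can produce a family $\{w_h\}\subset\mathbb V_h$ with $L(w_h)=1$ and $\|\varepsilon(w_h)\|$ uniformly bounded for all small $h$, plugging $v_h=w_h$ into the identity and applying the Cauchy--Schwarz inequality will yield $\omega_h(\tau_h)\leq\|\varepsilon(w_h)\|\,\|\tau_h\|$. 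Passage to the $\|\cdot\|_{C^{-1}}$ norm will then follow from the equivalence $\|\tau\|\leq c_0\|\tau\|_{C^{-1}}$ on $S$, which is an immediate consequence of the uniform positive definiteness of $C$ (the constant $c_0$ depending only on the coercivity bound $\delta$ and an upper bound on $C$).

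To construct such a family, I would first observe that $\mathbb V^{\alpha=1}$ is non-empty: assumption $(L2)$ prevents $L$ from vanishing identically on $\mathbb V$ (a density argument using $C^\infty_c(\Omega)\subset\mathbb V$ and the surjectivity of the trace onto $\Gamma_N$ would otherwise force $F=0$ and $f=0$), so rescaling produces $v_0\in\mathbb V$ with $L(v_0)=1$. Set $M:=\|\varepsilon(v_0)\|$. Lemma \ref{lem_V_h^1_conv} then supplies $w_h\in\mathbb V_h^{\alpha=1}$ with $w_h\rightarrow v_0$ in $\mathbb V$, whence $\|\varepsilon(w_h)\|\rightarrow M$ and in particular $\|\varepsilon(w_h)\|\leq M+1$ for every $h$ smaller than some $h_0>0$. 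For such $h$ and any $\tau_h\in\tilde\Lambda_L^h$, substituting $v_h=w_h$ into the equilibrium identity gives
$$\omega_h(\tau_h)=\omega_h(\tau_h)\,L(w_h)=\langle\tau_h,\varepsilon(w_h)\rangle\leq(M+1)\|\tau_h\|\leq(M+1)c_0\,\|\tau_h\|_{C^{-1}},$$
and setting $c:=(M+1)c_0$ proves the claim.

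The argument is a soft-analysis estimate, and no serious obstacle arises. The only point requiring any care is the non-emptiness of $\mathbb V^{\alpha=1}$, which makes Lemma \ref{lem_V_h^1_conv} applicable; everything else reduces to the test-function identity and the equivalence of $\|\cdot\|$ and $\|\cdot\|_{C^{-1}}$ on $S$.
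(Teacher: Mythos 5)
Your proof is correct and follows essentially the same route as the paper: take $v\in\mathbb V^{\alpha=1}$, use Lemma \ref{lem_V_h^1_conv} to get $w_h\in\mathbb V_h^{\alpha=1}$ with uniformly bounded strains, test the identity defining $\tilde\Lambda_L^h$ with $w_h$, and apply Cauchy--Schwarz. The only cosmetic differences are that the paper applies Cauchy--Schwarz directly in the weighted pairing, $\langle\tau_h,\varepsilon(w_h)\rangle\leq\|\tau_h\|_{C^{-1}}\|\varepsilon(w_h)\|_C$, so no norm-equivalence constant $c_0$ is needed, and that your explicit verification that $\mathbb V^{\alpha=1}\neq\emptyset$ under $(L2)$ is a detail the paper leaves implicit.
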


\begin{proof}
Let $v\in\mathbb V^{\alpha=1}$ and $\epsilon>0$ be given. Then, there exists a sequence $\{w_h\}$, $w_h\in\mathbb V_h^{\alpha=1}$ such that $w_h\rightarrow v$ in $\mathbb V$ as $h\rightarrow+\infty$. Hence, 
$$\exists h_0>0:\quad \|\varepsilon(w_h)\|_C\leq\|\varepsilon(v)\|_C+\epsilon\quad \forall 0<h\leq h_0$$
and using the definition of $\tilde\Lambda^h_{L}$, we obtain
$$\omega_h(\tau_h)=\omega_h(\tau_h)L(w_h)=\langle\tau_h,\varepsilon(w_h)\rangle\leq\|\tau_h\|_{C^{-1}}\|\varepsilon(w_h)\|_C\leq c\|\tau_h\|_{C^{-1}}\quad\forall 0<h\leq h_0,\;\forall \tau_h\in\tilde\Lambda_L^h,$$
where $c=\|\varepsilon(v)\|_C+\epsilon$.
\end{proof}

\begin{lemma}
Let $\{\tau_h\}$, $\tau_h\in \tilde\Lambda^h_{L}\cap P_h$ be such that $\tau_h \rightharpoonup \tau$  (weakly) in $S$ and $\omega_h(\tau_h)\rightarrow\zeta$ as $h\rightarrow0_+$. Then $\tau\in \tilde\Lambda_{L}\cap P$ and $\omega(\tau)=\zeta$. 
\label{lem_density2}
\end{lemma}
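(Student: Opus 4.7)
The plan is to adapt the proof of Lemma \ref{lem_density} by carefully tracking the variation of the loading parameter along the sequence. Set $\zeta_h := \omega_h(\tau_h)$. Since $\tau_h \in \tilde\Lambda_L^h \cap P_h$, the definition of $\omega_h$ yields $\tau_h \in \Lambda_{\zeta_h L}^h \cap P_h$, that is,
$$\langle \tau_h, \varepsilon(v_h)\rangle = \zeta_h L(v_h) \qquad \forall v_h \in \mathbb V_h,$$
and by assumption $\zeta_h \to \zeta$ as $h\to 0_+$.

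Next, I would fix $v \in \mathbb V$ and use Lemma \ref{lem_V_h_conv} to pick $v_h \in \mathbb V_h$ with $v_h \to v$ (strongly) in $\mathbb V$, so that $\varepsilon(v_h) \to \varepsilon(v)$ in $S$ and, by continuity of $L$, $L(v_h) \to L(v)$. The left-hand side of the identity above is a weak-strong pairing $\langle \tau_h, \varepsilon(v_h)\rangle$, which converges to $\langle \tau, \varepsilon(v)\rangle$, while the right-hand side is a product of a converging scalar sequence with a converging sequence and converges to $\zeta L(v)$. Passing to the limit therefore produces $\langle \tau, \varepsilon(v)\rangle = \zeta L(v)$ for every $v \in \mathbb V$, i.e.\ $\tau \in \Lambda_{\zeta L} \subset \tilde\Lambda_L$, and in view of assumption $(L2)$ the scalar $\zeta$ is the unique loading parameter associated with $\tau$, so $\omega(\tau) = \zeta$.

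It remains to verify $\tau \in P$. Since $P_h \subset P$, we have $\tau_h \in P$; the set $P$ is convex (because $\Phi$ is convex) and strongly closed in $S$ (because $\tau \mapsto \Phi(\tau(\cdot))$ behaves monotonically under pointwise a.e.\ limits along $L^2$-convergent subsequences), hence weakly closed, which gives $\tau \in P$. Combining both parts, $\tau \in \tilde\Lambda_L \cap P$ with $\omega(\tau) = \zeta$.

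The only non-routine point is the simultaneous passage to the limit in $\langle \tau_h, \varepsilon(v_h)\rangle = \zeta_h L(v_h)$, where both factors on the right depend on $h$; this is handled by observing that $\zeta_h \to \zeta$ is given by hypothesis and $L(v_h) \to L(v)$ by continuity, so no additional a priori bound on $\|\tau_h\|$ beyond what weak convergence already supplies is needed. Everything else follows the template of Lemma \ref{lem_density}.
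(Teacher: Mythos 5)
Your argument is correct and essentially identical to the paper's proof: weak closedness of the convex closed set $P$ gives $\tau\in P$, and passing to the limit in $\langle\tau_h,\varepsilon(v_h)\rangle=\omega_h(\tau_h)L(v_h)$ with $v_h\rightarrow v$ strongly (Lemma \ref{lem_V_h_conv}) gives $\langle\tau,\varepsilon(v)\rangle=\zeta L(v)$ for all $v\in\mathbb V$, hence $\tau\in\tilde\Lambda_L\cap P$ and $\omega(\tau)=\zeta$. No gaps worth noting.
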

\begin{proof}
Since $\tau_h \rightharpoonup \tau$ and $P$ is a closed convex set, $\tau\in P$. Let $v\in \mathbb V$ and $\{v_h\}$, $v_h\in\mathbb V_h$ be such that $v_h\rightarrow v$ in $\mathbb V$ as $h\rightarrow+\infty$.
From the definition of $\tilde\Lambda^h_{L}$, it follows that
$$\langle\tau_h,\varepsilon(v_h)\rangle=\omega_h(\tau_h)L(v_h).$$
Passing to the limit with $h\rightarrow0_+$, we conclude that $\tau\in \tilde\Lambda_{L}\cap P$ and $\omega(\tau)=\zeta$.
\end{proof}

\begin{theorem}
Let $\alpha\geq0$ be given and $\{\sigma_h\}$ be a sequence of solutions to $(\mathcal{P}^*_h)^\alpha$, $h>0$. Then $\sigma_h\rightarrow\sigma$ in $S$, $\omega_h(\sigma_h)\rightarrow\omega(\sigma)$ and $\psi_h(\alpha)\rightarrow\psi(\alpha)$ as $h\rightarrow0_+$, where $\sigma$ is a solution to $(\mathcal{P}^*)^\alpha$.
\label{thm_alpha_conv}
\end{theorem}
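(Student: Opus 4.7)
The plan is a standard compactness-plus-optimality argument in three stages: (i) establish a uniform a priori bound on $\{\sigma_h\}$, (ii) identify every weak cluster point as a solution of $(\mathcal P^*)^\alpha$ (hence as $\sigma$, by uniqueness), and (iii) upgrade weak to strong convergence via energy convergence. The case $\alpha=0$ is trivial (both $\sigma_h$ and $\sigma$ vanish), so assume $\alpha>0$.

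For the a priori bound, I would exploit the optimality of $\sigma_h$ with the admissible competitor $\tau_h=0\in\tilde\Lambda_L^h\cap P_h$, which gives $\mathcal I(\sigma_h)\leq\alpha\omega_h(\sigma_h)$. Combining this with Lemma~\ref{omega_bound} yields $\tfrac12\|\sigma_h\|_{C^{-1}}^2\leq\alpha c\|\sigma_h\|_{C^{-1}}$ for all sufficiently small $h$, hence $\|\sigma_h\|_{C^{-1}}\leq 2\alpha c$ and also $\omega_h(\sigma_h)\leq 2\alpha c^2$. Therefore, along a subsequence (not relabelled), $\sigma_h\rightharpoonup\bar\sigma$ in $S$ and $\omega_h(\sigma_h)\to\bar\zeta$ for some $\bar\sigma\in S$ and $\bar\zeta\geq 0$. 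Lemma~\ref{lem_density2} then forces $\bar\sigma\in\tilde\Lambda_L\cap P$ and $\omega(\bar\sigma)=\bar\zeta$.

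To identify $\bar\sigma$ as the solution of $(\mathcal P^*)^\alpha$, fix an arbitrary $\tau\in\tilde\Lambda_L\cap P$, say $\tau\in\Lambda_{\tilde\zeta L}\cap P$ with $\tilde\zeta=\omega(\tau)$. By Lemma~\ref{lem_r_h}, $r_h\tau\in\Lambda^h_{\tilde\zeta L}\cap P_h\subset\tilde\Lambda_L^h\cap P_h$ with $r_h\tau\to\tau$ in $S$, and in particular $\omega_h(r_h\tau)=\tilde\zeta=\omega(\tau)$ for every $h$. The optimality of $\sigma_h$ gives
$$
\mathcal I(\sigma_h)-\alpha\omega_h(\sigma_h)\;\leq\;\mathcal I(r_h\tau)-\alpha\omega(\tau).
$$
Passing to the liminf on the left (using weak lower semicontinuity of $\mathcal I$ and the convergence $\omega_h(\sigma_h)\to\omega(\bar\sigma)$) and to the limit on the right (strong convergence of $r_h\tau$), I obtain $\mathcal I(\bar\sigma)-\alpha\omega(\bar\sigma)\leq\mathcal I(\tau)-\alpha\omega(\tau)$ for every admissible $\tau$. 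Hence $\bar\sigma$ solves $(\mathcal P^*)^\alpha$; since this problem has a unique solution, $\bar\sigma=\sigma$, and a subsequence-principle argument shows that the whole sequence satisfies $\sigma_h\rightharpoonup\sigma$ and $\omega_h(\sigma_h)\to\omega(\sigma)$. The third claim $\psi_h(\alpha)\to\psi(\alpha)$ is then immediate from the identities $\psi_h(\alpha)=\omega_h(\sigma_h)$ and $\psi(\alpha)=\omega(\sigma)$.

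Strong convergence in $S$ remains. Inserting $\tau=\sigma$ in the displayed optimality inequality above and taking the limsup on both sides yields $\limsup_{h\to0_+}\mathcal I(\sigma_h)\leq\mathcal I(\sigma)$; combined with the weak lower semicontinuity bound $\liminf\mathcal I(\sigma_h)\geq\mathcal I(\sigma)$, this gives $\|\sigma_h\|_{C^{-1}}\to\|\sigma\|_{C^{-1}}$. Since $\|\cdot\|_{C^{-1}}$ is the Hilbert norm on $S$ equivalent to $\|\cdot\|$ and we already have $\sigma_h\rightharpoonup\sigma$, this norm convergence upgrades to $\sigma_h\to\sigma$ in $S$. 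The main technical obstacle is the matching of $\omega_h$ and $\omega$: the functional $\omega_h$ is only defined on $\tilde\Lambda_L^h$, so one must be careful that each competitor constructed via $r_h$ lies in the right discrete equilibrium set and carries the correct value of the loading functional, which is exactly what Lemma~\ref{lem_r_h} provides.
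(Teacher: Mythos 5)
Your proposal is correct and follows essentially the same three-step argument as the paper (a priori bound via the discrete optimality inequality and Lemma \ref{omega_bound}, identification of weak limits through Lemmas \ref{lem_r_h} and \ref{lem_density2} plus uniqueness, and strong convergence by the energy argument with competitor $r_h\sigma$). The only deviation is cosmetic: you use the zero stress as the competitor in the boundedness step, whereas the paper uses $r_h\tau$ for a fixed $\tau\in\tilde\Lambda_L\cap P$; both are admissible and yield the same bound.
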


\begin{proof}
The proof consists of three steps.

{\it Step 1 (Boundedness)}. Let $\tau\in\tilde\Lambda_L\cap P$ be fixed. Then $r_h\tau\in\Lambda_{\omega(\tau) L}^h\cap P_h\subset\tilde\Lambda_L^h\cap P_h$ and $r_h\tau\rightarrow \tau$ in $S$ as $h\rightarrow0_+$. From the definition of $(\mathcal{P}^*_h)^\alpha$ it follows:
$$\mathcal{I}(\sigma_h)-\alpha\omega_h(\sigma_h)\leq \mathcal{I}(r_h\tau)-\alpha\omega(\tau)\quad\forall h>0$$
since $\omega_h(r_h\tau)=\omega(\tau)$. From this and Lemma \ref{omega_bound}, we obtain
$$\exists c_1>0, c_2\in\mathbb R, h_0>0:\quad \frac{1}{2}\|\sigma_h\|^2_{C^{-1}}=\mathcal{I}(\sigma_h)\leq c_1\|\sigma_h\|_{C^{-1}}+c_2\quad\forall h\in(0,h_0).$$
This implies boundedness of $\{\sigma_h\}$ and consequently boundedness of $\{\omega_h(\sigma_h)\}$.

{\it Step 2 (Weak convergence)}. One can pass to subsequences $\{\sigma_{h'}\}\subset\{\sigma_h\}$ and $\{\omega_{h'}(\sigma_{h'})\}\subset\{\omega_{h}(\sigma_{h})\}$ such that
\begin{equation}
\left.
\begin{array}{c c l}
\sigma_{h'}\rightharpoonup\sigma\;\;\mbox{in }S & as & h'\rightarrow0_+,\\
\omega_{h'}(\sigma_{h'})\rightarrow\zeta& as & h'\rightarrow0_+.\\
\end{array}
\right\}
\label{weak_label}
\end{equation}
From Lemma \ref{lem_density2}, it follows that $\sigma\in\tilde\Lambda_L\cap P$ and $\zeta=\omega(\sigma)$. Let $\tau\in\tilde\Lambda_L\cap P$ be arbitrary. Then $r_{h'}\tau\in\Lambda_{\omega(\tau) L}^{h'}\cap P_{h'}\subset\tilde\Lambda_L^{h'}\cap P_{h'}$, $\omega_{h'}(r_{h'}\tau)=\omega(\tau)$ and $r_{h'}\tau\rightarrow \tau$ in $S$ as $h'\rightarrow0_+$. Hence,
$$\mathcal{I}(\sigma)-\alpha\omega(\sigma)\leq\liminf_{h'\rightarrow0_+}[\mathcal{I}(\sigma_{h'})-\alpha\omega_{h'}(\sigma_{h'})]\leq\liminf_{h'\rightarrow0_+}[\mathcal{I}(r_{h'}\tau)-\alpha\omega(r_{h'}\tau)]=\mathcal{I}(\tau)-\alpha\omega(\tau),$$
i.e., $\sigma$ is the solution to $(\mathcal P^*)^\alpha$. Since $(\mathcal P^*)^\alpha$ has a unique solution, (\ref{weak_label}) holds for the whole sequence. Consequently,
$$\psi_h(\alpha)=\omega_h(\sigma_h)\rightarrow\omega(\sigma)=\psi(\alpha) \quad\mbox{as}\quad h\rightarrow0_+.$$

{\it Step 3  (Strong convergence).} Since $r_h\sigma\in\tilde\Lambda_L^h\cap P_h$, $\omega_h(r_h\sigma)=\omega(\sigma)$ and $r_h\sigma\rightarrow\sigma$ in $S$ as $h\rightarrow0_+$, we have
\begin{eqnarray*}
\mathcal{I}(\sigma)&\leq&\liminf_{h\rightarrow0_+}\mathcal{I}(\sigma_h)\leq\limsup_{h\rightarrow0_+}\mathcal{I}(\sigma_h)=\limsup_{h\rightarrow0_+}[\mathcal{I}(\sigma_h)-\alpha\omega_h(\sigma_h)]+\alpha\omega(\sigma)\\
&\leq&\lim_{h\rightarrow0_+}[\mathcal{I}(r_h\sigma)-\alpha\omega_{h}(r_h\sigma)]+\alpha\omega(\sigma)=\mathcal{I}(\sigma).
\end{eqnarray*}
Therefore,
$$\|\sigma_h\|^2_{C^{-1}}=2\mathcal{I}(\sigma_h)\rightarrow 2\mathcal{I}(\sigma)=\|\sigma\|^2_{C^{-1}}\quad\mbox{as}\quad h\rightarrow0_+,$$
which implies strong convergence of $\{\sigma_h\}$ to $\sigma$ in S.
\end{proof}

\begin{remark}
\emph{We summarize the properties of the functions $\psi$ and $\psi_h$, $h>0$: 
\begin{itemize}
\item[$a)$] $\psi$ and $\psi_h$ are nondecreasing and continuous in $\mathbb R_+$ for any $h>0$;
\item[$b)$] $\psi(\alpha)\rightarrow\zeta_{lim}$,  $\psi_h(\alpha)\rightarrow\zeta_{lim,h}$ as $\alpha\rightarrow+\infty$, for any $h>0$;
\item[$c)$] $\zeta_{lim,h}\geq\zeta_{lim}\geq\psi(\alpha)$ for any $h>0$ and $\alpha\geq0$;
\item[$d)$] $\psi_h(\alpha)\rightarrow\psi(\alpha)$ as $h\rightarrow0_+$ for any $\alpha\geq0$.
\end{itemize}
}
\label{remark_conv}
\end{remark}
Notice that from Remark \ref{remark_conv} $b),d)$ it follows that for any $\epsilon>0$ there exists $\alpha$ large enough and $h_0>0$ small enough such that $|\psi_h(\alpha)-\zeta_{lim}|<\epsilon\;\;\forall h\leq h_0$. Direct convergence of $\zeta_{lim,h}$ to $\zeta_{lim}$ is guaranteed only for some yield functions $\Phi$ as follows from the next theorem.

\begin{theorem}
Let the yield function $\Phi$ be coercive on $\mathbb{R}_{sym}^{3\times3}$ and the assumptions $(L1), (L2)$ be satisfied. Then
\begin{equation}
\zeta_{lim,h}\rightarrow\zeta_{lim}\quad\mbox{as}\quad h\rightarrow0_+.
\end{equation}
\label{th_lim_conv}
\end{theorem}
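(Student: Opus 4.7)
The plan is to combine the lower bound $\zeta_{lim,h} \geq \zeta_{lim}$ already established in Corollary~5.1 with a matching upper bound $\limsup_{h \to 0_+} \zeta_{lim,h} \leq \zeta_{lim}$; together these two estimates yield the asserted convergence. The upper bound I would argue by contradiction. Assume $\limsup_{h \to 0_+} \zeta_{lim,h} > \zeta_{lim}$ and fix any $\zeta^{*} \in (\zeta_{lim},\, \limsup_{h \to 0_+}\zeta_{lim,h})$; the case $\zeta_{lim} = +\infty$ is trivial because then $\zeta_{lim,h}=+\infty$ for every $h$ by the corollary. By the definition of $\limsup$ there is a subsequence $h_k\to 0_+$ with $\zeta_{lim,h_k} > \zeta^{*}$, and the discrete counterpart of (\ref{lim_prop}) gives $[0,\zeta_{lim,h_k})\subseteq \mathcal D_{h_k}$, so $\zeta^{*}\in\mathcal D_{h_k}$ for every $k$. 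For each such $k$ one may therefore pick $\tau_{h_k}\in \Lambda^{h_k}_{\zeta^{*} L}\cap P_{h_k}$.

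The crucial step, and the sole place where coercivity of $\Phi$ enters, is to establish a uniform bound on $\{\tau_{h_k}\}$ in $S$. Coercivity means that for every $\gamma>0$ the sublevel set $\{\eta\in\mathbb R^{3\times 3}_{sym}\ |\ \Phi(\eta)\leq \gamma\}$ is bounded in $\mathbb R^{3\times 3}_{sym}$; consequently every $\tau\in P$ satisfies $|\tau(x)|\leq C$ for almost every $x\in\Omega$, so $P$ is bounded in $L^{\infty}(\Omega;\mathbb R^{3\times 3}_{sym})$ and a fortiori in $S$, giving $\|\tau_{h_k}\|_S \leq C|\Omega|^{1/2}$ uniformly in $k$.

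With a bounded sequence in the Hilbert space $S$ in hand, I would extract a weakly convergent subsequence $\tau_{h_k}\rightharpoonup \bar\tau$ in $S$ and invoke Lemma~\ref{lem_density} with the fixed loading parameter $\zeta^{*}$ to conclude $\bar\tau\in \Lambda_{\zeta^{*} L}\cap P$. This places $\zeta^{*}$ in $\mathcal D$ and hence forces $\zeta^{*}\leq \zeta_{lim}$, contradicting the choice of $\zeta^{*}$.

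The main (and really only nontrivial) obstacle is the boundedness step. Without coercivity of $\Phi$ the set $P$ is in general unbounded in $L^{\infty}$ — the von~Mises criterion, whose yield function depends only on the deviatoric part of stress and leaves the hydrostatic component unconstrained, is the standard example — and the weak-compactness argument collapses. This is precisely why coercivity of $\Phi$ must be imposed as a separate hypothesis in the theorem.
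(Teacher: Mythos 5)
Your argument is correct and uses the same two essential ingredients as the paper — Corollary 5.1 for the inequality $\zeta_{lim,h}\geq\zeta_{lim}$, and coercivity of $\Phi$ solely to get the uniform $L^\infty$ bound on $P$ that makes the discrete admissible stresses weakly compact in $S$ — but the execution differs in a worthwhile way. The paper does not argue by contradiction at a fixed load level: it first shows that $\{\zeta_{lim,h}\}$ is uniformly bounded by pairing stresses with a bounded family $w_h\in\mathbb V_h^{\alpha=1}$ (Lemma \ref{lem_V_h^1_conv}), observes that the boundedness of $P$ makes the discrete limit load attained, i.e. $\zeta_{lim,h}\in\mathcal D_h$, then extracts $\tau_{h'}\in\Lambda^{h'}_{\zeta_{lim,h'}L}\cap P_{h'}$ with $\tau_{h'}\rightharpoonup\tau$ and $\zeta_{lim,h'}\rightarrow\hat\zeta$, and passes to the limit with \emph{varying} loads (in the spirit of Lemma \ref{lem_density2}) to conclude $\hat\zeta\in\mathcal D$, hence $\hat\zeta=\zeta_{lim}$. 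You instead fix $\zeta^{*}\in(\zeta_{lim},\limsup_{h\to0_+}\zeta_{lim,h})$ and work at that single level, which lets you quote the fixed-$\zeta$ closedness Lemma \ref{lem_density} verbatim and dispenses entirely with the boundedness and attainment of $\zeta_{lim,h}$ (your argument is insensitive to whether the $\limsup$ is finite). The price is that you need the discrete analogue of (\ref{lim_prop}), i.e. $[0,\zeta_{lim,h})\subseteq\mathcal D_h$; the paper only asserts this implicitly ("analogous properties" of the discrete objects), so you should add the one-line justification: for $0\leq\zeta'\leq\zeta$ and $\tau_h\in\Lambda^h_{\zeta L}\cap P_h$ one has $\frac{\zeta'}{\zeta}\tau_h\in\Lambda^h_{\zeta' L}\cap P_h$, since $\Phi$ is convex with $\Phi(0)=0$ and $S_h$, $\Lambda^h_{\zeta L}$ scale linearly. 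With that remark included, your proof is complete; the paper's route, by contrast, yields as byproducts the explicit bound $\zeta_{lim,h}\leq cM$ and the attainment of the discrete limit loads.
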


\begin{proof} 
Coerciveness of $\Phi$ ensures that the set $P$ is bounded in $L^\infty(\Omega;\mathbb R^{3\times 3}_{sym})$, i.e.
\begin{equation}
\exists c>0:\quad |\tau_{ij}(x)|\leq c\quad\forall \tau\in P,\;\forall i,j=1,2,3,\;\mbox{for a.a. }x\in\Omega.
\label{P_bound}
\end{equation}
Next, we show that $\{\zeta_{lim,h}\}$ is bounded. Consider a bounded sequence $\{w_h\}$, $w_h\in\mathbb V_h^{\alpha=1}$:
\begin{equation}
\exists M>0:\quad\|\varepsilon(w_h)\|_{L^1(\Omega;\mathbb R^{3\times 3}_{sym})}\leq M \quad\forall h>0.
\label{M_bound}
\end{equation}
The existence of such a sequence is guaranteed by Lemma  \ref{lem_V_h^1_conv}. Then for any $\zeta\in\mathcal D_h$ and $\tau_h\in\Lambda^h_{\zeta L}\cap P_h$ it holds
$$\zeta=\zeta L(w_h)=\langle\tau_h,\varepsilon(w_h)\rangle\stackrel{(\ref{P_bound})}{\leq}c\|\varepsilon(w_h)\|_{L^1(\Omega;\mathbb R^{3\times 3}_{sym})}\stackrel{(\ref{M_bound})}{\leq}cM.$$
Hence, $\zeta_{lim,h}\leq cM<+\infty$  for any $h>0$. In addition, from boundedness of $P$, it follows that $\zeta_{lim,h}\in\mathcal D_h$ for any $h>0$.

Let $\{\tau_h\}$, $h>0$, be such that $\tau_h\in\Lambda_{\zeta_{lim,h} L}^h\cap P_h$. Then $\{\tau_h\}$ is bounded in $S$ and there exist subsequences $\{\zeta_{lim,h'}\}$ and $\{\tau_{h'}\}$, $\tau_{h'}\in\Lambda_{\zeta_{lim,h'} L}^{h'}\cap P_{h'}$ such that
$$\tau_{h'}\rightharpoonup\tau\;\;\mbox{in }S,\quad \zeta_{lim,h'}\rightarrow\hat\zeta,\quad h'\rightarrow0_+.$$
Clearly, $\tau\in\Lambda_{\hat\zeta L}\cap P$ and thus $\hat\zeta\in\mathcal D$. Therefore $\hat\zeta=\zeta_{lim}$ using Corollary 5.1.
\end{proof}

\section{Numerical experiments}
\label{sec_eval}

In order to verify the previous theoretical results, we have performed several numerical experiments with two yield functions presented below. Problem $(\mathcal P_h)^\alpha$ which is needed for the evaluation of $\psi_h(\alpha)$ is solved by a regularized semismooth Newton method. This method has been proposed and theoretically justified in \cite[ALG3]{CHKS14}. Each iterative step leads to a quadratic programming problem. After finding a solution $u_h:=u_h(\alpha)$ of  $(\mathcal P_h)^\alpha$, the value $\zeta=\psi_h(\alpha)$ of the load parameter is computed by (\ref{zeta_from_alpha_h}).

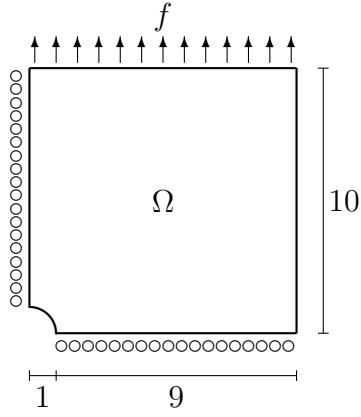
\begin{figure}[htbp]
        \centering
        \begin{picture}(140,140)

            {\thicklines
            \put(30,20){\line(1,0){90}}
            \put(20,120){\line(1,0){100}}
            \put(20,30){\line(0,1){90}}
            \put(120,20){\line(0,1){100}}
            \put(20,20){\oval(20,20)[rt]}
            }
            \multiput(15,32)(0,5){18}{\circle{4}}
            \multiput(32,15)(5,0){18}{\circle{4}}
            \multiput(22,122)(8,0){13}{\vector(0,1){10}}

            \put(70,134){\makebox(0,0)[b]{$f$}}
            \put(70,70){\makebox(0,0)[c]{$\Omega$}}
            \put(25,0){\makebox(0,0)[t]{1}}
            \put(75,0){\makebox(0,0)[t]{9}}
            \put(132,70){\makebox(0,0)[l]{10}}
            \put(20,4){\line(1,0){100}}
            \put(130,20){\line(0,1){100}}
            \put(20,2){\line(0,1){4}}
            \put(30,2){\line(0,1){4}}
            \put(120,2){\line(0,1){4}}
            \put(128,20){\line(1,0){4}}
            \put(128,120){\line(1,0){4}}

        \end{picture}
\caption{Geometry of the plane strain problem.}
\label{fig_scheme}
\end{figure}

The performed experiments are related to a plain strain problem with $\Omega$ depicted in Figure \ref{fig_scheme}: $\Omega$ is a quarter of the square containing the circular hole of radius 1 in its center. The constant traction of density $f=(0,450),(0,0)$ is applied on the upper, and the right vertical side, respectively. This load corresponds to $\zeta=1$. On the rest of $\partial\Omega$ the symmetry boundary conditions are prescribed. We consider  linear Hooke's law for a homogeneous, isotropic elastic material:
\begin{equation} 
  \tau = Ce\ \Leftrightarrow\ \tau = {\lambda\, \mbox{tr}(e)\, \iota} + 2\mu e,\quad e,\tau\in \mathbb{R}^{3\times3}_{sym},
\label{hooke_law}
\end{equation}
where $\iota$ is the $(3\times 3)$ identity matrix, $\mbox{tr}(e) = e_{ii}$ is the trace of $e$ and $\lambda=\frac{E\nu}{(1+\nu)(1-2\nu)}$, $\mu=\frac{E}{2(1+\nu)}$ are positive constants representing Lame's coefficients. 
The elastic material parameters are set as follows: $E = 206 900$ (Young's modulus) and $\nu = 0.29$ (Poisson ratio). 

The loading paths represented by the graph of $\psi_h:\alpha\mapsto\zeta$ are compared for seven different meshes with 1080, 2072, 3925, 10541, 23124, 41580 and 92120 nodes. The problem is implemented in MatLab.

\subsection{Yield function 1}

Consider the yield function
$$\Phi(\tau)=\sqrt{C^{-1}\tau:\tau}, \quad \tau\in\mathbb{R}^{3\times3}_{sym}$$
(a similar yield function has been considered in, e.g., \cite{R09, CG10}).
Then
$$\left(\Sigma(e)\right)(x)=\mathbb D \Psi(e)(x)=\left\{
\begin{array}{r l}
Ce(x), & \sqrt{Ce(x):e(x)}\leq\gamma,\\[3pt]
\frac{\gamma}{\sqrt{Ce(x):e(x)}}Ce(x), & \sqrt{Ce(x):e(x)}\geq\gamma,
\end{array}
\right.,\quad \forall e\in S,\;\;\mbox{for a.a. }x\in \Omega,$$
$$\Psi(e)=\frac{1}{2}\int_\Omega\left\{Ce:e-\left[\left(\sqrt{Ce:e}-\gamma\right)^+\right]^2\right\}dx, \quad \forall e\in S,
$$
respectively, where $(g)^+$ denotes the positive part of a function $g$. 

From Theorem \ref{theorem_psi} $(i), (ii)$ we know that for any $\alpha\in(0,+\infty)$ the values $\psi(\alpha)$, $\psi_h(\alpha)$ give a lower bound of $\zeta_{lim}$, and $\zeta_{lim,h}$, respectively. Since $\Phi$ is coercive on $\mathbb{R}^{3\times3}_{sym}$, it holds that $\zeta_{lim,h}\rightarrow\zeta_{lim}$ as $h\rightarrow0_+$ using Theorem \ref{th_lim_conv}.

For purposes of the experiment, we choose $\gamma = 10$ and the increments $\triangle\alpha$ defined as follows: $\triangle\alpha=20$ for $\alpha\in[0,2000]$ and $\triangle\alpha=100$ for $\alpha=[2000,10000]$. The path-following procedure has been terminated if $\alpha\geq 10000$.

The comparison of the loading paths for seven different meshes is shown in Figure \ref{fig_load_path2}. Since the curves practically coincide the zoom is depicted in Figure \ref{fig_load_path2_detail}. We see that the value $\zeta\approx 9.48$ turns out to be a suitable lower bound of $\zeta_{lim}$. Further, one can see that $\psi_h\leq\psi_{h'}$ for $h\leq h'$. Therefore one can expect uniform convergence of $\{\psi_h\}$ to $\psi$ on closed and bounded intervals using Dini's theorem.
\begin{figure}[htbp]
        \begin{center}
          \includegraphics[width=0.7\textwidth]{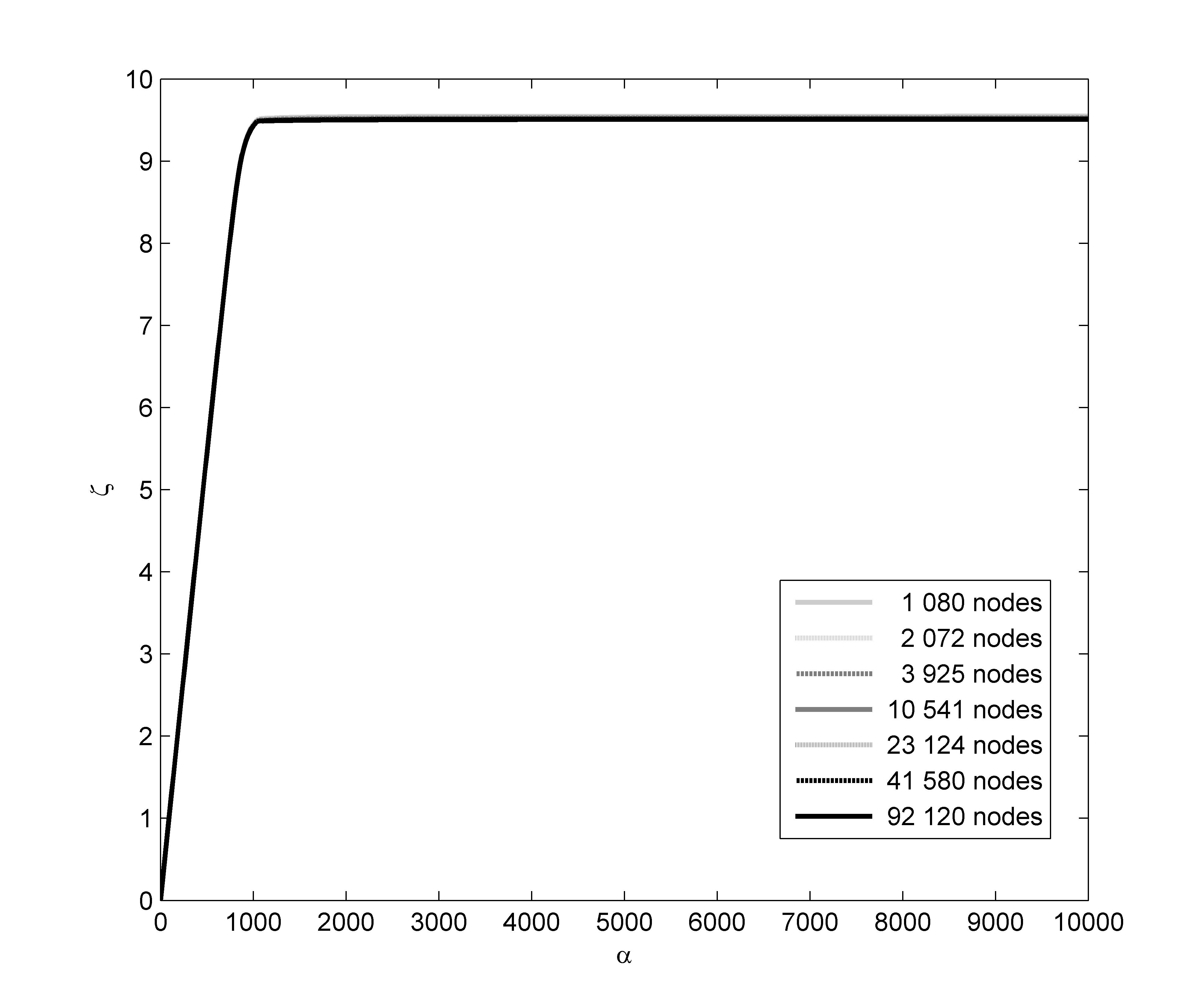}
        \end{center}
\vspace*{-1cm}
\caption{Loading paths up to $\alpha\in[0,10000]$.}
\label{fig_load_path2}
\end{figure}
\begin{figure}[htbp]
        \begin{center}
          \includegraphics[width=0.7\textwidth]{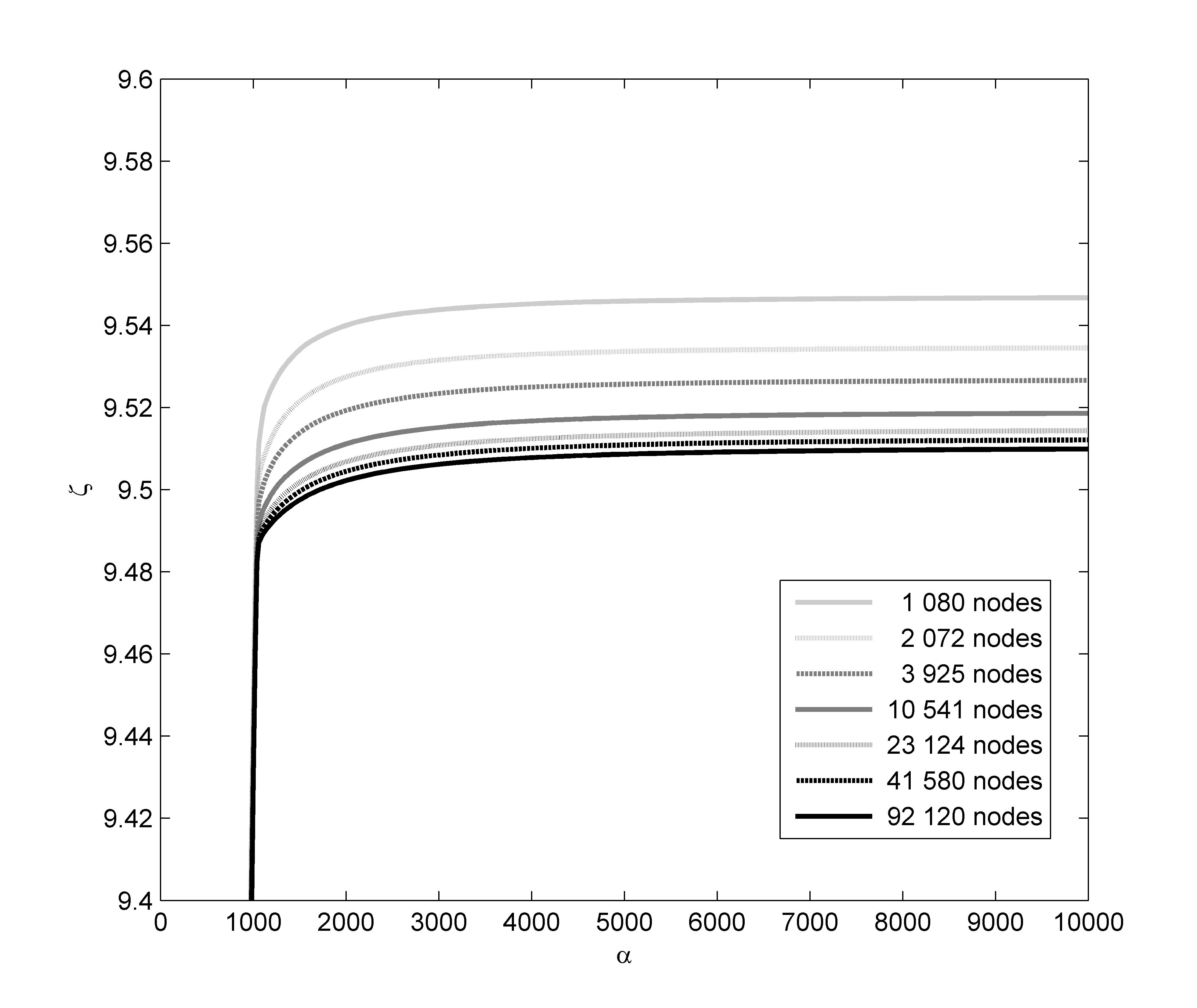}
        \end{center}
\vspace*{-1cm}
\caption{Loading paths for $\alpha\in[0,10000]$  (zoom).}
\label{fig_load_path2_detail}
\end{figure}


\subsection{Yield function 2 - von Mises criterion}

The von  Mises criterion \cite{T85, Ch96, SHHC13, CHKS14} is suitable for an isotropic and pressure insensitive material. The corresponding yield function has the form
\begin{equation}
\Phi(\tau)={\tau^D:\tau^D}, \quad \tau\in\mathbb{R}^{3\times3}_{sym},
\label{Phi_Mises}
\end{equation}
where $\tau^D = \tau -1/3\, \mbox{tr}{(\tau)}\iota$ is the deviatoric part of $\tau$.
If the elasticity tensor $C$ is defined as in (\ref{hooke_law}), then 
\begin{equation*}
\Psi(e):=\int_\Omega\left\{\frac{1}{2}Ce:e-\frac{1}{4\mu}\left[\left(2\mu\sqrt{e^D:e^D}-\gamma\right)^+\right]^2\right\} dx.
\end{equation*} 

Unlike Yield function 1, $\Phi$ defined by (\ref{Phi_Mises}) is not coercive on $\mathbb{R}^{3\times3}_{sym}$. Therefore convergence $\zeta_{lim,h}\rightarrow\zeta_{lim}$ as $h\rightarrow0_+$ is not guaranteed.


We choose $\gamma=450\sqrt{2/3}$ and $\triangle\alpha=5,\,100,\,1000$ for $\alpha\in[0,300],\,[300,10000], \,[10000, 100000]$, respectively.
The comparison of the loading paths for  seven different meshes is shown in  in Figure \ref{fig_detail}.  The curves practically coincide up to $\zeta=1$. Therefore the value $\zeta=1$ seems to be a reliable lower estimate of $\zeta_{lim}$.  As in the previous example, one can see that $\psi_h\leq\psi_{h'}$ for $h\leq h'$. 
\begin{figure}[htbp]
        \begin{center}
          \includegraphics[width=0.7\textwidth]{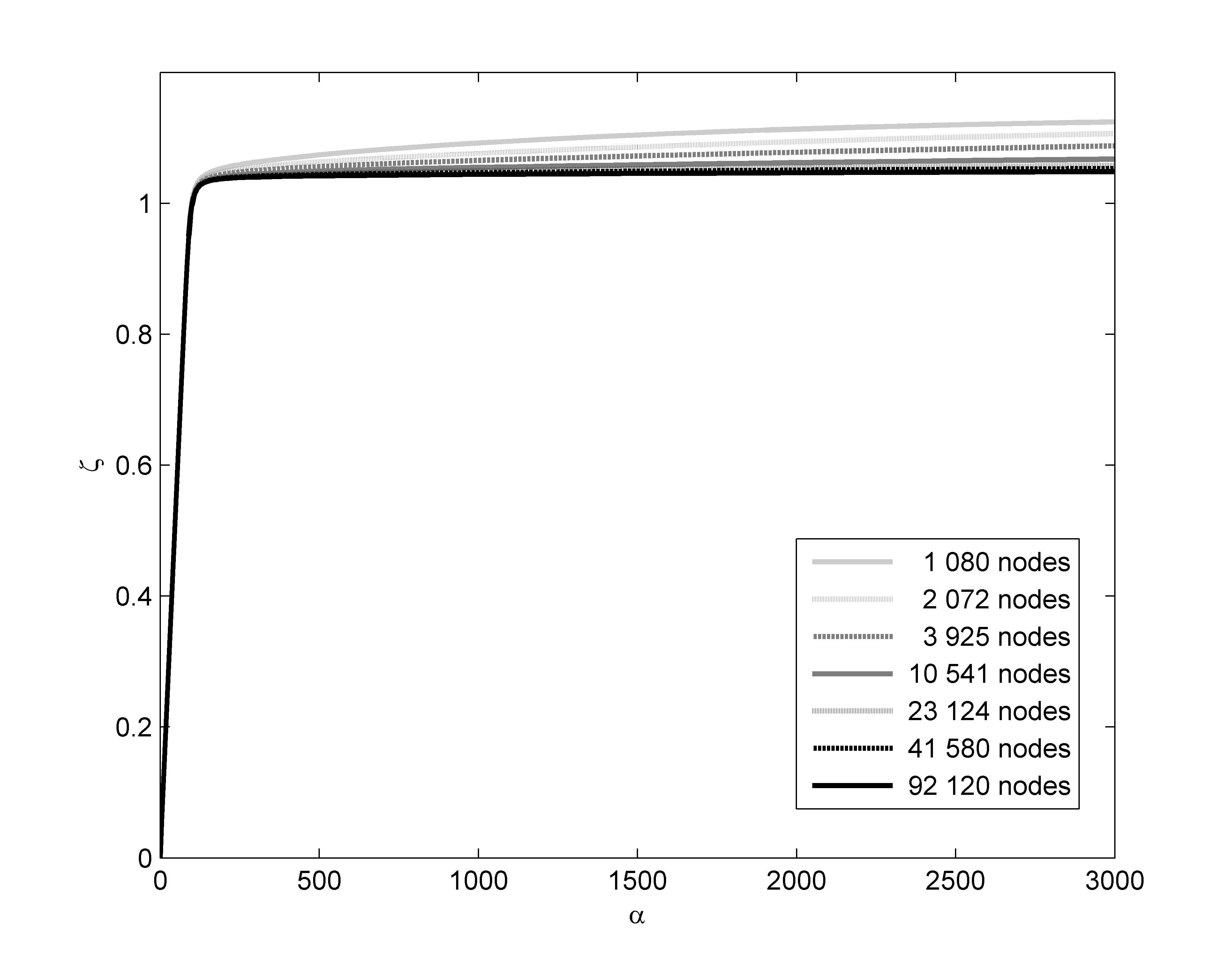}
        \end{center}
\vspace*{-1cm}
\caption{Loading paths up to $\alpha=3000$.}
\label{fig_detail}
\end{figure}

In Figure \ref{fig_load_path}, zooms of the loading paths up to $\alpha=100000$ for the seven meshes are displayed. We observe that the curve representing the coarsest mesh is almost constant in a vicinity of $\alpha=100000$ and the corresponding value of $\psi_h$ is approximately equal to 1.14 there. So one can expect that $\zeta_{lim}\in[1.00,1.14]$. On the other hand, pointwise convergence of $\{\psi_h(\alpha)\}$ becomes slow for large values of $\alpha$. Therefore, direct convergence $\zeta_{lim,h}\rightarrow\zeta_{lim}$ as $h\rightarrow0_+$ seems to be at least problematic.
\begin{figure}[htbp]
        \begin{center}
          \includegraphics[width=0.7\textwidth]{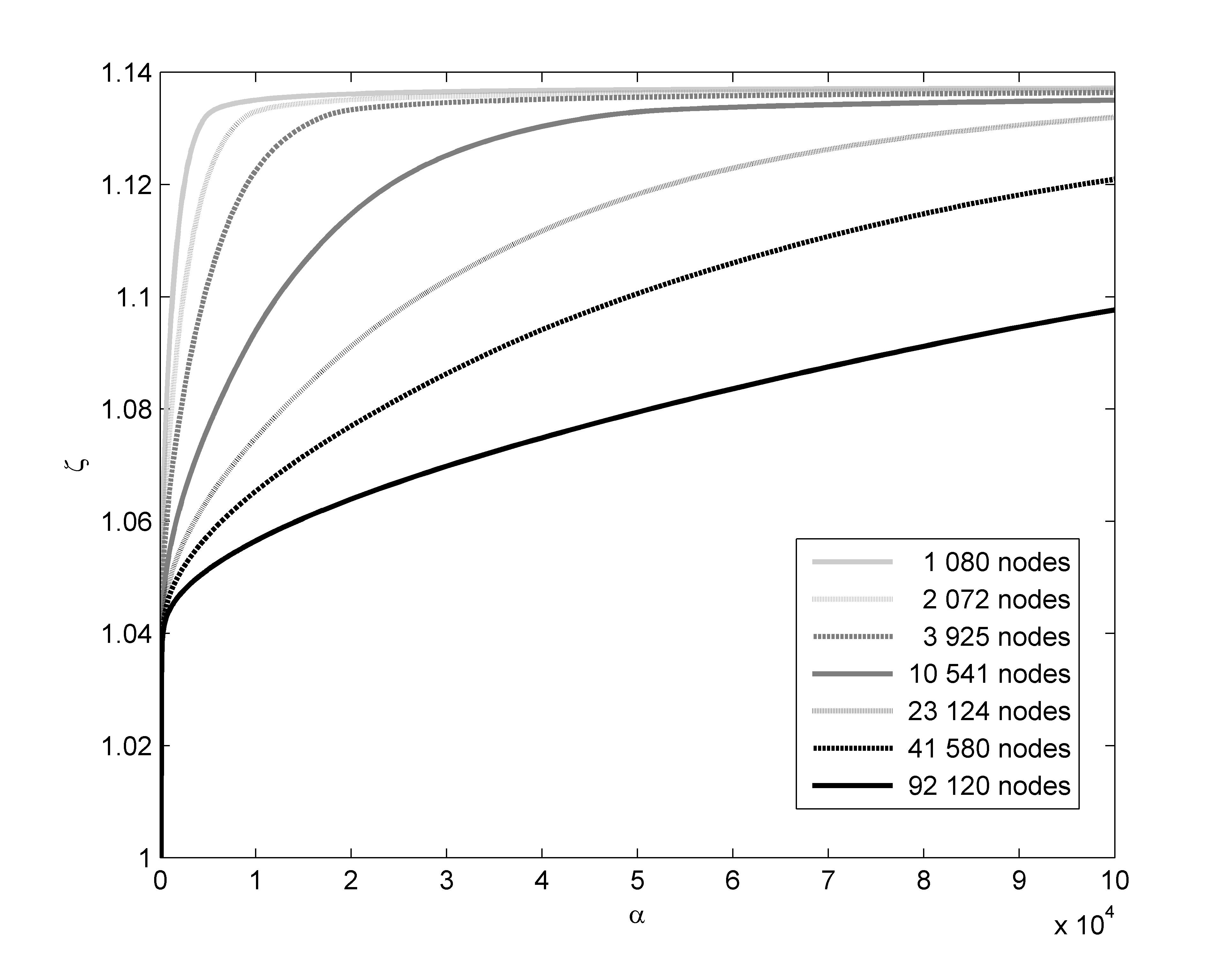}
        \end{center}
\vspace*{-1cm}
\caption{Loading paths up to $\alpha=100000$   (zoom).}
\label{fig_load_path}
\end{figure}

\section{Conclusion}
\label{sec_conclusion}
The paper deals with a new incremental method for computing the limit load in deformation plasticity models.  This procedure is based on a continuation parameter $\alpha$ ranging in $(0, +\infty)$ which is dual to the standard loading parameter $\zeta\in(0,\zeta_{lim})$, where $\zeta_{lim}$ is the critical value of $\zeta$. We have shown that there exists a continuous, nondecreasing function $\psi$ in $(0,+\infty)$ and such that $\psi(\alpha)\rightarrow\zeta_{lim}$ if $\alpha\rightarrow+\infty$. Therefore $\psi(\alpha)$ gives a guaranteed lower bound of $\zeta_{lim}$  for any $\alpha\in(0,+\infty)$. To evaluate $\psi(\alpha)$ for given $\alpha$ we derived a minimization problem for the stored energy functional subject to the constraint $L(v)=\alpha$ whose solutions define the respective value $\psi(\alpha)$. The second part of the paper was devoted to a finite element discretization  and convergence analysis. In particular, convergence of the discrete loading parameters $\zeta_{lim,h}$ to $\zeta_{lim}$ as $h\rightarrow0_+$ was proved for some yield functions. Finally, numerical experiments confirmed the efficiency of the proposed method.

\section*{Acknowledgements}
This work was held in the frame of the scientific cooperation between the Czech and Russian academies of sciences - Institute of Geonics
CAS and St. Petersburg Department of Steklov Institute of Mathematics and supported by the European Regional Development Fund in the IT4Innovations Centre of Excellence project (CZ.1.05/1.1.00/02.0070). The third author (S.S.) acknowledges the support of the project 13-18652S (GA CR) and the institutional support reg. no. RVO:68145535.


\end{document}